\documentclass[11pt,reqno]{amsart}

\usepackage[foot]{amsaddr}
\usepackage{nicefrac, xfrac}
\usepackage{amsfonts, amsmath, amssymb, amscd, amsthm, bm}

\usepackage{enumerate}
\usepackage{url}
\usepackage[linktocpage=true,colorlinks,citecolor=magenta,linkcolor=blue,urlcolor=magenta]{hyperref}
\usepackage{multicol}
\usepackage{comment}
\usepackage{xcolor}

\newtheorem{thm}{Theorem}[section]

\newtheorem{prop}[thm]{Proposition}
\theoremstyle{definition}
\newtheorem{defn}[thm]{Definition}

\numberwithin{equation}{section}
\numberwithin{equation}{section}

\newcounter{rom}
\renewcommand{\therom}{(\roman{rom})}
	{\end{list}}

\usepackage{amsmath}
\usepackage{amssymb}
\usepackage{cite}

\usepackage{graphicx} 
\usepackage{float}
\usepackage{subfigure} 
\newtheorem{theorem}{Theorem}[section]
\newtheorem{lemma}[theorem]{Lemma}

\begin{document}
    \title{Existence of Rotationally Symmetric Embedded $f$-Minimal Tori}
	\author{Peng Peng}
	\address{School of Mathematical Sciences, Fudan University,
		Shanghai 200433, P.R. China} 
	\email{20110180011@fudan.edu.cn}
	\begin{abstract}
		We generalize Angenent's shrinking torus \cite{Angenent1992} by proving the existence of an embedded $n$-dimensional $f$-minimal hypersurface diffeomorphic to $S^1\times S^{n-1}$ in $\mathbb{R}^{n+1}$ equipped with the metric $$ds^{2}=e^{-\frac{f(|x|^2)}{2n}}{\sum^{n+1}_{i=1}dx^{2}_{i}},$$ where $f$ is a convex function of $|x|^2$ satisfying $m\leq f'\leq M$ for some positive constants $m$ and $M$. 
	\end{abstract}
	\maketitle
	
\section{Introduction}
A one-parameter family of immersed hypersurfaces $M_t \subset \mathbb{R}^{n+1}$ evolves by mean curvature flow if
$$\frac{\partial x}{\partial t} = -H \mathbf{n},$$ 
where $x$ denotes the position vector. Singularities may develop as $t$ increases. One class of singularity models is given by hypersurfaces satisfying the self-shrinker equation 
$H=\frac{1}{2} \langle x,\mathbf{n} \rangle.$

In 1986, Abresch and Langer \cite{Abresch1986TheNC} classified the closed self-shrinking curves. For $n \geq 2$, the simplest and most fundamental examples include the round sphere $S^{n}$ of radius $\sqrt{2n}$ centered at the origin, the round cylinders $\mathbb{R}^{k}\times S^{n-k}(\sqrt{2(n-k)})$, and the plane $\mathbb{R}^{n}$. In 1992, using the shooting method, Angenent \cite{Angenent1992} constructed the first non-trivial embedded compact shrinking hypersurface with topology $S^{1}\times S^{n-1}$. In 2015, again using the shooting method, McGrath \cite{McGrath2015ClosedMC} constructed self-shrinkers with topology $S^{n}\times S^{n} \times S^{1}$ in $\mathbb{R}^{2n+2}$. In 2023, Riedler \cite{RO2023} constructed a closed embedded self-shrinker with topology $S^{1}\times M$, where $M$ is an isoparametric hypersurface in $S^{n}$ whose principal curvatures have equal multiplicities. We refer the reader to \cite{Nguyen2006ConstructionOC,Nguyen2009Construction,Nguyen2014Construction,Moller2011ClosedSS,Kapouleas2011MeanCS} for further examples. 

A useful fact is that self-shrinkers can be viewed as minimal hypersurfaces in $\mathbb{R}^{n+1}$ endowed with the Gaussian metric
\begin{equation*}
	e^{-\frac{|x|^{2}}{2n}} \sum_{i=1}^{n+1} dx_i^{2}.
\end{equation*}
As a natural generalization of self-shrinkers, $f$-minimal hypersurfaces were first introduced by Cheng, Mejia, and Zhou  \cite{MR3324919}. Let $(\mathbb{R}^{n+1},ds^{2})$ be a Riemannian manifold with metric
\begin{equation}\label{eq1.1}
	ds^{2}=e^{-\frac{f}{2n}}\sum^{n+1}_{i=1}dx^{2}_{i},
\end{equation}
where $f$ is a smooth function of $|x|^2$ on $\mathbb{R}^{n+1}$. An immersed hypersurface is called $f$-minimal if it is minimal with respect to this metric.  The first variation of the weighted volume functional on $(\mathbb{R}^{n+1},ds^{2})$ shows that an $f$-minimal hypersurface $\Sigma$ satisfies
\begin{equation}\label{eq1.2}
	H=\frac{1}{4}\langle \nabla f, \mathbf{n} \rangle,
\end{equation}
where $\nabla f$ denotes the Euclidean gradient of $f$ on $\mathbb{R}^{n+1}$. Since $f$-minimal hypersurfaces generalize self-shrinkers, they possess analogous geometric properties \cite{cheng2014eigenvalue,MR3324919}.

The shooting method is a powerful tool for constructing special solutions to systems of ordinary differential equations; see, for example, \cite{McGrath2015ClosedMC,john2017,Cheng2019ExamplesOC}. Under suitable assumptions on $f$, we use the shooting method to prove the existence of an embedded $f$-minimal torus. Our main result is the following theorem.
\begin{thm}[Main Theorem]\label{th1}
	Suppose that $f$ in \eqref{eq1.1} is radial, so that $f=f(|x|^2)$, and that the metric $ds^{2}$ is given by \eqref{eq1.1}. If $f''\geq0$ and there exist constants $0<m\leq M$ such that $m\leq f' \leq M$, then there exists an embedded $f$-minimal hypersurface in $(\mathbb{R}^{n+1},ds^{2})$ diffeomorphic to $S^1\times S^{n-1}$.
\end{thm}

Throughout the paper, we assume that $f$ satisfies the hypotheses of Theorem~\ref{th1}.

We consider a hypersurface $\Sigma$ obtained by rotating an arclength parameterized curve $\Gamma(t)=(x(t),r(t))$ around the $x$-axis. For convenience, we view $\Gamma(t)=(x(t),r(t))$ either as a graph $r=u(x)$ over the $x$-axis or as a graph $x=g(r)$ over the $r$-axis. We will use the following definition repeatedly.
\begin{defn}
	On the graph $r=u(x)$, a point is called a \emph{horizontal point of $u$} if $u'=0$ at that point, and a \emph{vertical point of $u$} if $|u'|=\infty$ there. Similarly, on the graph $x=g(r)$, a point is called a \emph{horizontal point of $g$} if $g'=0$ at that point, and a \emph{vertical point of $g$} if $|g'|=\infty$ there.
\end{defn}

A direct computation shows that the mean curvature of $\Sigma$ can be expressed as
$$H=\frac{n-1}{u\sqrt{1+(u')^{2}}}-\frac{u''}{(1+(u')^{2})^{\frac{3}{2}}}.$$
\eqref{eq1.2} implies that the profile curve of a rotationally symmetric $f$-minimal hypersurface satisfies
\begin{equation}\label{eq1.3}
	\frac{d\theta}{dx}=\frac{u''}{1+(u')^{2}}=\frac{n-1}{u}+(xu'-u)\frac{f'}{2},
\end{equation}
where $f'=f'(x^2+u^{2})$ and $\theta$ denotes the angle that the profile curve $\Gamma$ makes with the $x$-axis.
When $\Gamma$ is instead represented as a graph $x=g(r)$ over the $r$-axis, the equation becomes
\begin{equation}\label{eq1.4}
	\frac{d\varphi}{dr}=\frac{g''}{1+(g')^{2}}=(r\frac{f'}{2}-\frac{n-1}{r})g'-\frac{f'}{2}g,
\end{equation}
where $f'=f'(r^{2}+g^{2})$ and $\varphi$ denotes the angle formed by the profile curve $\Gamma(t)=(x(t),r(t))$ and the $r$-axis. It is also useful to derive the corresponding system of ordinary differential equations for the arclength parameterized curve $\Gamma (t)=(x(t),r(t))$:
\begin{equation}\label{eq1.5}
	\left.
	\begin{cases}
		\dot{x}=\cos\theta \\
		\dot{r}=\sin\theta \\
		\dot\theta=(\frac{n-1}{r}-\frac{r}{2}f')\cos\theta+\frac{x}{2}f'\sin\theta ,
	\end{cases}
	\right.
\end{equation}
where $f'=f'(x^{2}+r^{2})$, and a dot denotes differentiation with respect to the arc-length parameter $t$. To construct an embedded torus, we study \eqref{eq1.5} subject to the following initial conditions:
\begin{equation}\label{eq1.6}
	\left.
	\begin{cases}
		x(0)=0 \\
		r(0)=R \\
		\theta(0)=0.
	\end{cases}
	\right.
\end{equation}
We write the relevant portions of $\Gamma_R$ as $x=g_{R}(r)$ and $r=u_{R}(x)$, respectively, according as the curve is viewed as a graph over the $r$-axis or the $x$-axis.

The proof proceeds in three main steps. We first show that, when $R$ is sufficiently large, the point where $\theta_{R}=-\pi$  lies in the second quadrant of the $(x,r)$-plane. Next, we prove that there exists a threshold $R^*$ for such $R$. Finally, we show that the curves under consideration remain nonsingular. Consequently, $\Gamma_{R^*}$ is a closed embedded profile curve. Rotating it about the $x$-axis yields the desired embedded rotationally symmetric $f$-minimal hypersurface.

\section{Analysis of $g_{R}$ for Large $R$}

Let $l=l(x)$ be the function defined implicitly by
$$G(x,l)=l^{2}-\frac{2(n-1)}{f'(x^{2}+l^{2})}=0.$$
By implicit differentiation, the derivative of $l$ with respect to $x$ is given by
\begin{equation*}
	\frac{dl}{dx}=-\frac{G_{x}}{G_{l}}=-\frac{\frac{4(n-1)xf''}{(f')^{2}}}{2l(f')^{2}+4(n-1)lf''}.
\end{equation*}
Hence, $l'\leq 0$ for $x\geq0$. It is easy to see that the graph of $r=l(x)$ must intersect the $r$-axis. We denote by $(0,\tilde{r})$ the point of intersection. 

Recall that $u=\sqrt{2(n-1)}$ is a particular solution of the classical self-shrinker equation. The curve $r=l(x)$ plays a role analogous to that of the constant solution $u=\sqrt{2(n-1)}$ in the classical self-shrinker case. More precisely, if $u(x)<l(x)$, then
\begin{equation}\label{eq2-1}
	K_x(u):=\frac{n-1}{u}-\frac{u}{2}f'(x^{2}+u^{2})>0,
\end{equation}
and	$K_x(u)\leq0$ if $u(x)\geq l(x)$. Indeed, this follows from $$\frac{\partial K}{\partial u}=-\frac{n-1}{u^{2}}-\frac{f'}{2}-u^{2}f''<0,$$
together with the fact that $K(l)=0$. Thus, $u_R$ is decreasing for sufficiently small $x>0$ if $R>\tilde{r}$, and  increasing for sufficiently small $x>0$ if $R<\tilde{r}$.

We begin with the following lemma, which collects some basic properties of $g$. Similar results have appeared in \cite{Kleene2010SelfshrinkersWA} and \cite{Drugan2013ImmersedS}.
\begin{lemma}\label{lemma1}
	Suppose that $g$ is a nonnegative solution of  \eqref{eq1.4}. Then the following statements hold:
	\begin{enumerate}[(i)]
		\item If $r_{0}$ is a critical point of $g(r)$, then $r_{0}$ is a local maximum point. In particular, $g$ has at most one critical point.
		\item Suppose that $g\geq 0$ and $g'\geq 0$. If $g''(r_{1})<0$ at some $r_{1}$, then $g''(r)<0$ for every $r<r_{1}$ in the domain of $g$. 
		\item If a horizontal point of $u$ lies above the curve $r=l(x)$, then $u$ reaches a local maximum at that point. If a horizontal point of $u$ lies below the curve $l=l(x)$, then $u$ reaches a local minimum at that point.
	\end{enumerate}
\end{lemma}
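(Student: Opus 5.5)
For part (i), I would evaluate \eqref{eq1.4} at a critical point $r_0$. If $g'(r_0)=0$, then \eqref{eq1.4} reduces to $g''(r_0)=-\tfrac{f'}{2}\,g(r_0)$. Since $f'\geq m>0$ and $g\geq 0$, this gives $g''(r_0)\leq 0$.

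The degenerate case is $g(r_0)=0$. Then $g(r_0)=g'(r_0)=0$. The function $g\equiv 0$ solves \eqref{eq1.4}, so ODE uniqueness forces $g\equiv 0$, which I exclude as the trivial solution. Hence $g(r_0)>0$, $g''(r_0)<0$, and $r_0$ is a strict local maximum.

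For "at most one critical point", suppose there were two critical points. Both would be strict local maxima. Between them $g$ attains an interior minimum, which is again a critical point. That critical point would have to be a strict maximum, which is a contradiction.

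For part (ii), I would argue by contradiction. Suppose $g''(r)\ge 0$ for some $r<r_1$, and let $r_2=\sup\{r<r_1: g''(r)\geq 0\}$. Then $g''(r_2)=0$ and $g''<0$ on $(r_2,r_1]$, so necessarily $g'''(r_2)\leq 0$.

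Next I would differentiate \eqref{eq1.4} in $r$, keeping in mind that $f'=f'(r^2+g^2)$ has derivative $2f''(r+gg')$. At $r_2$ the left side becomes $g'''(r_2)/(1+g'^2)$ because $g''(r_2)=0$. The $\tfrac{f'}{2}g'$ terms cancel, leaving
$$\frac{g'''}{1+g'^2}=\frac{n-1}{r^2}g'+f''(r+gg')(rg'-g)\quad\text{at } r_2 .$$
The sign of $rg'-g$ is the key point. I would use \eqref{eq1.4} itself at $r_2$, where the right-hand side vanishes. This gives $\tfrac{f'}{2}(rg'-g)=\tfrac{n-1}{r}g'$, so $rg'-g=\tfrac{2(n-1)g'}{rf'}\geq 0$. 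Together with $f''\geq 0$, $g,g'\geq 0$ and $r>0$, this yields $g'''(r_2)\ge \tfrac{n-1}{r_2^2}g'(r_2)$.

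It remains to show $g'(r_2)>0$. If $g'(r_2)=0$, then \eqref{eq1.4} gives $0=g''(r_2)=-\tfrac{f'}{2}g(r_2)$, so $g(r_2)=0$, and uniqueness again forces the trivial solution. Thus $g'''(r_2)>0$, contradicting $g'''(r_2)\le 0$. This step, controlling the sign of the derivative at the first sign change, is the main obstacle, and I expect the substitution trick above to resolve it.

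For part (iii), at a horizontal point $u'=0$, and \eqref{eq1.3} becomes $u''=K_x(u)$ in the notation of \eqref{eq2-1}. By the monotonicity of $K$ in $u$ established above, $K_x(u)<0$ strictly above $r=l(x)$ and $K_x(u)>0$ strictly below it. Hence $u''<0$ gives a local maximum in the first case and $u''>0$ gives a local minimum in the second.
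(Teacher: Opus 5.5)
Your proposal is correct and follows the paper's proof in all three parts. Part (i) evaluates \eqref{eq1.4} at $g'=0$; part (ii) differentiates \eqref{eq1.4} at a zero of $g''$ and uses \eqref{eq1.4} itself to get $rg'-g=\frac{2(n-1)g'}{rf'}\geq 0$; part (iii) reads off the sign of $K_x(u)$. Your version is slightly more careful than the paper's in three places: the supremum construction of $r_2$ avoids assuming a clean sign change of $g''$, the uniqueness argument excluding $g=g'=0$ supplies the strictness ($g(r_0)>0$, $g'(r_2)>0$) that the paper uses without comment, and you actually prove the "at most one critical point" claim.
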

\begin{proof}
	Part (i) follows immediately from \eqref{eq1.4} by setting $g'=0$. 
	We prove part (ii) by contradiction. Suppose that there exists $r_{0}<r_{1}$ such that $g''(r_{0})=0$, with $g''>0$ immediately to the left of $r_{0}$ and $g''<0$ immediately to the right of $r_{0}$. Equation \eqref{eq1.4} implies that 
	\begin{equation}\label{eq2-2}
		(rg'-g)(r_{0})=[\frac{2}{f'}\frac{n-1}{r}g'](r_{0})>0.
	\end{equation}
	Differentiating \eqref{eq1.4}, evaluating the result at $r_{0}$, and using \eqref{eq2-2}, we obtain
	$$g'''(r_{0})=(1+(g')^{2})[\frac{n-1}{r^{2}}g'+(r+gg')f''(rg'-g)](r_{0})>0.$$
	This contradicts the assumption that $g''<0$ immediately to the right of $r_0$.

	Part (iii) follows immediately from \eqref{eq2-1}.
\end{proof}

In this section, we prove that, for sufficiently large $R$, the graph of $g_{R}(r)$ intersects the $r$-axis at two points. 

\begin{lemma}\label{lemma2}
	Suppose that $(x_R(t),r_R(t))$ is a solution to \eqref{eq1.5} satisfying the initial conditions \eqref{eq1.6}, and regard $\theta_R$ as a function of $r$. There exist constants $\theta_1, \theta_2\in(-\frac{\pi}{2},0)$ and $C_0,C_1>0$ depending only on $n$, $m$, and $M$, such that, for sufficiently large $R$, both $$\theta_1<\theta_{R}(R-\frac{1}{R})<\theta_2$$ and $$0<\frac{C_{1}}{R}< g_{R}(R-\frac{1}{R})< \frac{C_{0}}{R}$$ hold.
\end{lemma}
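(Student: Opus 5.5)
The plan is to blow up a neighbourhood of the starting point $(0,R)$ at scale $1/R$. At that scale the system \eqref{eq1.5} becomes, to leading order, an explicitly integrable ODE with curvature coefficient trapped between $m/2$ and $M/2$. Near $(0,R)$ we have $\dot\theta\approx-\frac{R}{2}f'\cos\theta$, which is of order $R$, so $\theta$ changes by $O(1)$ over an arclength $O(1/R)$. Accordingly I set
$$t=\frac{\tau}{R},\qquad x=\frac{X}{R},\qquad r=R+\frac{y}{R}.$$
Then $\frac{dX}{d\tau}=\cos\theta$ and $\frac{dy}{d\tau}=\sin\theta$, while
$$\frac{d\theta}{d\tau}=-\frac{r}{2R}f'\cos\theta+\frac{n-1}{rR}\cos\theta+\frac{x}{2R}f'\sin\theta .$$
The target point $r=R-\frac1R$ corresponds to $y=-1$. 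Since $m\le f'\le M$ holds everywhere, the argument of $f'$ is irrelevant; this is what makes the constants depend only on $n,m,M$.

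\textbf{Step 1 (a priori confinement).} Consider the maximal $\tau$-interval on which $\theta\in(-\frac{\pi}{2},0]$ and $y\ge-1$. On it I will show $|X|\le\tau$ and $|y|\le1$, hence $r/R=1+O(R^{-2})$, $x/R=O(\tau R^{-2})$ and $(n-1)/(rR)=O(R^{-2})$. Therefore
$$\frac{d\theta}{d\tau}=-c(\tau)\cos\theta+E,\qquad c(\tau)=\frac{r}{2R}f'\in\Big[\frac{m}{2}(1-\epsilon),\frac{M}{2}(1+\epsilon)\Big],\qquad |E|\le \frac{C(1+\tau)}{R^2},$$
with $\epsilon=O(R^{-2})$. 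At $\tau=0$ we have $d\theta/d\tau<0$, so $\theta$ immediately becomes negative and $y$ decreases. Consequently $r$ is strictly decreasing and $\Gamma_R$ is a graph $x=g_R(r)$ on this stretch, which makes sense of $\theta_R$ as a function of $r$.

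\textbf{Step 2 (use $\theta$ as parameter and compare).} While $\cos\theta$ is bounded below, $d\theta/d\tau<0$, so $\theta$ is a legitimate parameter. The equations become
$$\frac{dy}{d\theta}=\frac{-\tan\theta}{c-E/\cos\theta},\qquad \frac{dX}{d\theta}=\frac{-1}{c-E/\cos\theta}.$$
For the exact model $E=0$ with constant $c$ these integrate to $y=\frac1c\ln\cos\theta$ and $X=-\theta/c$. With $c$ varying in $[\frac m2,\frac M2]$ up to $O(R^{-2})$ errors, monotone comparison gives
$$\frac{2}{m}(1+o(1))\ln\cos\theta\ \le\ y\ \le\ \frac{2}{M}(1+o(1))\ln\cos\theta ,$$
and similarly $\frac{2|\theta|}{M}(1+o(1))\le X\le\frac{2|\theta|}{m}(1+o(1))$.

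\textbf{Step 3 (read off the bounds).} Hence $y$ reaches $-1$ at an angle $\theta_R$ satisfying $e^{-M/2}-o(1)\le\cos\theta_R\le e^{-m/2}+o(1)$. This angle occurs before $\cos\theta$ degenerates, since $\ln\cos\theta\to-\infty$ as $\theta\to-\frac{\pi}{2}$, and it occurs at bounded $\tau$. Choose $\theta_1<-\arccos(e^{-M/2})$ and $\theta_2>-\arccos(e^{-m/2})$, both in $(-\frac{\pi}{2},0)$ and slightly beyond these values so that the $o(1)$ terms are absorbed. For large $R$ this gives $\theta_1<\theta_R(R-\frac1R)<\theta_2$. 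Then $X\in\big[\frac{2|\theta_2|}{M}(1-o(1)),\ \frac{2|\theta_1|}{m}(1+o(1))\big]$, a compact subinterval of $(0,\infty)$. Since $g_R=X/R$, this yields $C_1/R<g_R(R-\frac1R)<C_0/R$.

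\textbf{Main obstacle.} The delicate step is the bootstrap in Steps 1--2. The error $E$ must remain $O(R^{-2})$ uniformly up to the exit time. This requires bounding $\tau$ and $X$ a priori, which in turn uses the comparison that is being proved. I would close the loop with a continuity argument. On the set where $|\theta|\le\frac{\pi}{2}-\delta(M)$, with $\delta(M)$ chosen so that $\frac{2}{M}\ln\cos(\frac{\pi}{2}-\delta)<-2$, the $\tau$-length is bounded by a constant, because $d\theta/d\tau\le-\frac m2\sin\delta+O(R^{-2})$. This forces $y$ to hit $-1$ inside that set, keeps $E$ small there, and also justifies the $\theta$-reparametrization.
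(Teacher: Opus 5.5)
Your proof is correct. It reaches the paper's estimates by a differently organized route.

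The paper stays in the original variables. It integrates $\frac{d}{dr}\log|\sin\varphi_R|$ from \eqref{eq1.4} over $[R-\frac1R,R]$ twice. The first time it uses $f'\ge m$ and drops $-\frac{f'}{2}g$, which has a favourable sign; this gives \eqref{eq2-3}. The second time it uses $f'\le M$, and controls that term by two preliminary facts: a bound $g_R\le C_0/R$ obtained by integrating \eqref{eq1.3} in $x$, and the first-crossing claim $g_R'(R-\frac1R)\le-\frac1R$. The lower bound $g_R>C_1/R$ then comes from a second integration of \eqref{eq1.3} in $x$.

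Since $|\sin\varphi_R|=\cos\theta_R$ on this stretch and $\frac{f'}{4}(r^2-R^2)\approx\frac{f'}{2}R(r-R)$, your model solution $y=\frac1c\ln\cos\theta$ is the paper's core inequality in blown-up form. Your fixed-angle bootstrap (threshold $\delta(M)$) does the job of the claim $g_R'\le-\frac1R$.

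What your version buys:
\begin{itemize}
\item A single parametrization by $\theta$ yields both the angle bounds and the bounds on $g_R=X/R$.
\item Every lower-order term is uniformly $O(R^{-2})$.
\item The constants are explicit: $\theta_1$ slightly below $-\arccos e^{-M/2}$, $\theta_2$ slightly above $-\arccos e^{-m/2}$, $C_0\approx 2|\theta_1|/m$, $C_1\approx 2|\theta_2|/M$.
\item It proves outright that $\theta_R$ decreases strictly and stays away from $-\frac\pi2$ along the whole stretch. The paper asserts or uses these facts without separate proof, e.g.\ $u_R'<0$ on $(0,g_R(R-\frac1R))$ before the claim is established, and $\tan\theta_R\ge\tan\theta_1$ along the segment in its last step.
\end{itemize}

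The paper's route, in exchange, needs no change of variables. It produces explicit finite-$R$ inequalities \eqref{eq2-3}--\eqref{eq2-5} by the same $r$-integration that Lemma~\ref{lemma3} then continues with \eqref{eq2-6}.
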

\begin{proof}
 Equation \eqref{eq1.4} implies
	\begin{align*}
		\frac{d\varphi_{R}}{dr}
		&<(\frac{rf'}{2}-\frac{n-1}{r})g_{R}'<(\frac{mr}{2}-\frac{n-1}{r})\tan\varphi_{R}.
	\end{align*} 
	Hence, $$\frac{d\varphi_{R}}{dr}\frac{\cos\varphi_{R}}{\sin\varphi_{R}}<(r\frac{m}{2}-\frac{n-1}{r}).$$
	Integrating this inequality from $R-\frac{1}{R}$ to $R$ and using the initial condition $\varphi_R(R)=-\frac{\pi}{2}$, we obtain
	\begin{equation}\label{eq2-3}
		\sin\varphi_{R}(R-\frac{1}{R})>-(\frac{R}{R-\frac{1}{R}})^{n-1}e^{\frac{m}{4}(-2+\frac{1}{R^2})}.	
	\end{equation}
	
	Now we estimate $g_R(R-\frac{1}{R})$ from above for large $R>0$. Since $u'_R<0$ on $(0,g_R(R-\frac{1}{R}))$, equation \eqref{eq1.3} yields
	\begin{equation*}
		-\frac{\pi}{2}<\int_{0}^{x_1}\frac{d\theta_R}{dx}<\int_{0}^{x_1}\frac{n-1}{u_{R}}-u_{R}\frac{f'}{2}<\frac{2(n-1)-m(R-\frac{1}{R})^{2}}{2(R-\frac{1}{R})}x_1.
	\end{equation*}
	Setting $x_1=g_{R}(R-\frac{1}{R})$, we obtain $g_{R}(R-\frac{1}{R})< \frac{C_{0}}{R}$, where $C_{0}$ depends on $n$, $m$, and $M$.
	
	We claim that $g'_R(R-\frac{1}{R})\leq -\frac{1}{R}$. Otherwise, there exists $s\in(0,1)$ such that $g'_R(R-\frac{s}{R})= -\frac{1}{R}$. Equation \eqref{eq1.4} gives 
	\begin{align*}
		\frac{d\varphi_{R}}{dr}
		&>(r\frac{M}{2}-\frac{n-1}{r})\tan\varphi_{R}-\frac{f'}{2}g.
	\end{align*} 
	For $R-\frac{s}{R}\leq r<R$, combining this inequality with $g_{R}(R-\frac{s}{R})\leq \frac{C_{0}}{R}$ gives $$\frac{d\varphi_{R}}{dr}\frac{\cos\varphi_{R}}{\sin\varphi_{R}}>(r\frac{M}{2}-\frac{n-1}{r})-\frac{Mg}{2g'}>(r\frac{M}{2}-\frac{n-1}{r})+\frac{MC_0}{2}.$$
	Integrating the above inequality from $R-\frac{s}{R}$ to $R$ and using the initial value $\varphi_R(R)=-\frac{\pi}{2}$, we obtain
	\begin{equation}\label{eq2-4}
		\sin\varphi_{R}(R-\frac{s}{R})<-(\frac{R}{R+\frac{s}{-R}})^{n-1}e^{\frac{M}{4}(-2+\frac{s^2}{R^2})s}e^{-\frac{MC_0}{2R}}.
	\end{equation}
	This yields $$\sin\varphi_{R}(R-\frac{s}{R})<-e^{-\frac{M}{2}},$$ as $R\to +\infty,$ which contradicts $g'_R(R-\frac{s}{R})= -\frac{1}{R}$.  Using $g'_R(R-\frac{1}{R})\leq -\frac{1}{R}$ and arguing as in the derivation of \eqref{eq2-4}, we obtain
	\begin{equation}\label{eq2-5}
		\sin\varphi_{R}(R+\frac{1}{-R})<-(\frac{R}{R+\frac{1}{-R}})^{n-1}e^{\frac{M}{4}(-2+\frac{1}{R^2})}e^{-\frac{MC_0}{2R}}.
	\end{equation}
	Together with \eqref{eq2-3}, this implies that, for sufficiently large $R>0$, there exist constants $-\frac{\pi}{2} < \theta_1 < \theta_2 < 0$, depending only on $n$, $m$, and $M$, such that
	$$-\frac{\pi}{2} < \theta_1 < \theta_R\Big(R-\frac{1}{R}\Big) < \theta_2 < 0.$$

	Let $x_2=g_{R}(R-\frac{1}{R})$. For $x \in (0,x_2)$ and sufficiently large $R$, equation \eqref{eq1.3} gives
	\begin{equation*}
		\theta_2>\int_{0}^{x_2}\frac{d\theta_R}{dx}>(\frac{2(n-1)-M(R)^{2}}{2R}+\frac{Mx_2}{2}\tan\theta_1)x_2.
	\end{equation*}
	 Hence, there exists a constant $C_1>0$, depending only on $n$, $m$, and $M$, such that $g_{R}(R-\frac{1}{R})> \frac{C_{1}}{R}$.  This completes the proof.
\end{proof}

The statement and proof of the following lemma are similar to those of Lemma 1 in \cite{Angenent1992}.

\begin{lemma}\label{lemma3}
	For sufficiently large $R$,  we have, for every $r\geq \sqrt{\frac{4}{m}},$
	$$g_{R}(r)\leq \frac{C_2}{R},$$ 
    where $C_2>0$ is a constant depending only on $n$, $m$, and $M$.
\end{lemma}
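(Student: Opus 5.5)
We plan to follow Angenent's Lemma 1 and treat $x=g_R(r)$ as a graph over the $r$-axis on the interval $[\sqrt{4/m},\,R-\frac1R]$, where the curve has already turned away from the top point. The starting data come from Lemma~\ref{lemma2}: $g_R(R-\frac1R)<\frac{C_0}{R}$, and $\theta_R(R-\frac1R)\in(\theta_1,\theta_2)$, so $g_R'(R-\frac1R)=\cot$ of the angle, up to sign, is bounded by a constant depending only on $n,m,M$. The idea is that on $r\ge\sqrt{4/m}$ the coefficient $\frac{rf'}{2}-\frac{n-1}{r}$ in \eqref{eq1.4} is large and positive for large $r$. This makes the linear part of the equation strongly damping when it is read backwards in $r$, so $g$ stays comparable to its size at $r=R-\frac1R$.

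First, we show that on the relevant range the graph stays nonsingular with $g_R\ge0$ and $g_R'\le 0$, so that $g_R$ increases as $r$ decreases. By Lemma~\ref{lemma1}(i), $g_R$ has no interior minimum, so $g_R'$ cannot vanish while $g_R\geq 0$ unless at a maximum, and this rules out turning back. Second, we write \eqref{eq1.4} as the linear ODE
\[
\Big(\frac{g'}{1+(g')^2}\Big)' \ \text{or, more simply,}\quad g''=(1+(g')^2)\Big[\Big(\frac{rf'}{2}-\frac{n-1}{r}\Big)g'-\frac{f'}{2}g\Big].
\]
We then compare $w=g$ with a supersolution. Consider the quantity $h=rg'-g$, or equivalently $(g/r)'=h/r^2$. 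When $g\ge0$ and $g'\le0$ we get $h\le 0$, so $g/r$ is nonincreasing in $r$. Going down in $r$ this only gives $g(r)\le g(R-\tfrac1R)\cdot\frac{r}{R-1/R}$ in the wrong direction, so this alone is not enough. Instead we use the damping. For $r\ge\sqrt{4/m}$ we have $\frac{rf'}{2}-\frac{n-1}{r}\ge \frac{mr}{2}-\frac{n-1}{r}$, and in the region where this is $\ge c\,r$ we obtain $\frac{d\varphi}{dr}\le c r\tan\varphi$ as in the proof of Lemma~\ref{lemma2}. Integrating downward from $R-\frac1R$ yields $|g'(r)|\lesssim |g'(R-\tfrac1R)|\,e^{-c((R-1/R)^2-r^2)/2}$, as long as $\varphi$ stays away from $\pm\pi/2$. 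Integrating $g'$ then gives $g(r)\le g(R-\tfrac1R)+\int_r^{R}|g'|\le \frac{C_0}{R}+\frac{C}{R}$, because $\int_r^R e^{-c(R^2-s^2)/2}ds\lesssim \frac{1}{cR}$.

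The main obstacle is the lower part of the range near $r=\sqrt{4/m}$, where $\frac{mr}{2}-\frac{n-1}{r}$ need not dominate, and the term $-\frac{f'}{2}g$ could push $\varphi$ toward vertical. To handle this, we use Lemma~\ref{lemma1}(ii) to control the convexity of $g$: if $g''<0$ at some point, then it stays negative below that point, so $g'$ is monotone there and a blow-up of $|g'|$ can be excluded, since $g''<0$ with $g'\le0$ makes $|g'|$ decrease as $r$ decreases. If $g''\ge0$ throughout, then $|g'|$ is largest at the top endpoint and is already bounded by the estimate above. In both cases $|g'|\le C/R$ on the lower range, which has bounded length once $r\ge\sqrt{4/m}$ and $r\le R$. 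Combining this with the previous paragraph gives $g_R(r)\le C_2/R$ for all $r\ge\sqrt{4/m}$ in the domain. For $r$ between $R-\frac1R$ and $R$, the bound follows directly from $g_R\le g_R(R-\frac1R)$, since $g'\le 0$ there.
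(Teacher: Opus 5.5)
Your core mechanism is the same as the paper's. You start from the data of Lemma~\ref{lemma2} at $R_1=R-\frac1R$ and integrate $\frac{d\varphi}{dr}\le(\frac{rf'}{2}-\frac{n-1}{r})\tan\varphi$ downward in $r$, which is valid where $g\ge0$ and $g'\le0$. This gives exponential decay of the slope, and you then integrate $g'$. On the region where $\frac{mr}{2}-\frac{n-1}{r}\ge cr$ your estimate is correct. Your bound $\int_r^R e^{-c(R^2-s^2)/2}\,ds\lesssim\frac{1}{cR}$ is cleaner than the paper's integration by parts.

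\textbf{The gap is in the remaining range $[\sqrt{4/m},r_c]$,} which you yourself call the main obstacle. Two things go wrong there.

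First, Lemma~\ref{lemma1}(ii) assumes $g'\ge0$, but you apply it on the branch where $g'\le0$. Running its proof on your branch, at a zero $r_0$ of $g''$ one gets
$g'''(r_0)=(n-1)(1+(g')^2)\,g'\big[\frac{1}{r^2}+\frac{2f''(r+gg')}{rf'}\big]$.
This has the sign of $g'$ (when $r+gg'>0$), so it is nonpositive. Hence $g''$ can only cross zero from positive below to negative above. Negativity of $g''$ therefore propagates upward in $r$, not downward, and nothing excludes $g''>0$ on the lower part of the range.

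Second, your other case is reversed. If $g''\ge0$ and $g'\le0$, then $g'$ is nondecreasing, so $|g'|$ is largest at the bottom endpoint $r=\sqrt{4/m}$, not the top. That is exactly the scenario in which the slope grows as $r$ decreases.

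So the claim $|g'|\le C/R$ on the lower range is not established. For $n\le3$ it can be rescued, because $\frac{mr}{2}-\frac{n-1}{r}\ge\frac{3-n}{r}\ge0$ for $r\ge\sqrt{4/m}$, so $g''\le0$ automatically on your branch. For $n\ge4$ the coefficient can be negative on $[\sqrt{4/m},\sqrt{2(n-1)/m})$, and your dichotomy does not control $g'$ there.

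\textbf{How to close it.} The obstacle disappears if you keep the $\frac{n-1}{r}$ term, as the paper does. Dividing by $\tan\varphi<0$ gives $(\log|\sin\varphi|)'\ge\frac{mr}{2}-\frac{n-1}{r}$ on the whole branch $g\ge0$, $g'\le0$, with no sign condition on the coefficient. Integrating from $r$ to $R_1$ yields
$|\sin\varphi(r)|\le (R_1/r)^{n-1}e^{-\frac m4(R_1^2-r^2)}|\sin\varphi(R_1)|$.

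For $r\ge\sqrt{4/m}$ and large $R$ the prefactor is $\le1$. Hence $\cos\varphi(r)\ge\cos\varphi(R_1)$ and $|g'(r)|\le|g'(R_1)|(R_1/r)^{n-1}e^{-\frac m4(R_1^2-r^2)}$. Splitting the integral of this at $s=R_1/2$ gives $O(1/R)$. Alternatively, a Gr\"onwall/bootstrap argument on the bounded interval $[\sqrt{4/m},r_c]$ would also work, starting from $g(r_c)\le C/R$ and an exponentially small $|g'(r_c)|$.

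\textbf{A smaller misstatement.} Your first step says Lemma~\ref{lemma1}(i) ``rules out turning back''. It does not. The maximum point $r_R$ of $g_R$ generally lies inside $[\sqrt{4/m},R-\frac1R]$; Lemma~\ref{lemma4} even shows $r_R\to\infty$. This is harmless, because $g_R(r)\le g_R(r_R)$ for $r<r_R$, so it suffices to prove the bound on $[\max(r_R,\sqrt{4/m}),R]$, where $g_R'\le0$. But the reduction should be stated rather than the false claim that $g_R'\le0$ throughout.
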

\begin{proof}
	Using \eqref{eq1.4} and regarding $\theta_R$ as a function of $r$, we obtain
	\begin{equation*}
		\frac{d\theta_{R}}{dr}\geq (\frac{n-1}{r}-\frac{r}{2}f')\frac{\cos\theta_{R}}{\sin\theta_{R}}.
	\end{equation*}
	Thus,  
	\begin{equation}\label{eq2-6}
		\begin{aligned}
			(-\log\cos\theta_{R})'\leq\frac{n-1}{r}-\frac{r}{2}m.\\     	
		\end{aligned}
	\end{equation} 	
	Suppose that $g_{R}$ attains its maximum at $r_{R}$. For $r\geq r_{R}$, integrating \eqref{eq2-6} from $r$ to $R_{1}:=R-\frac{1}{R}$ yields
	\begin{equation*}
		\begin{aligned}
			\cos\theta_{R}(r)\leq(\frac{R_{1}}{r})^{n-1}e^{\frac{m}{4}(r^{2}-R_{1}^{2})}\cos\theta_{R}(R_{1})<(\frac{R_{1}}{r})^{n-1}e^{\frac{m}{4}(r^{2}-R_{1}^{2})}\cos\theta_1.
		\end{aligned}
	\end{equation*}
	Since $g'_{R}(r)=\cot\theta_{R}(r)$ and $\theta_R(r)\leq \theta_2$, we have 
	\begin{equation}\label{eq2-7}
		g'_{R}(r)=\frac{\cos\theta_{R}(r)}{\sin\theta_{R}(r)}\geq \frac{\cos\theta_{R}(r)}{\sin\theta_R(R_1)}\geq \frac{\cos\theta_1}{\sin\theta_2}(\frac{R_{1}}{r})^{n-1}e^{\frac{m}{4}(r^{2}-R_{1}^{2})}.
	\end{equation}
	Consequently, integrating the preceding inequality yields
	\begin{align}\label{eq2-8}
		g_{R}(r)&\leq g_{R}(R_{1})-\frac{\cos\theta_1}{\sin\theta_2}\int_{r}^{R_{1}}(\frac{R_{1}}{s})^{n-1}e^{\frac{m}{4}(s^{2}-R_{1}^{2})}ds.
	\end{align}
	For $n=2$, we estimate the integral term of \eqref{eq2-8} as follows:
	\begin{align*}
		&\int_{r}^{R_{1}}\frac{R_{1}}{s}e^{\frac{m}{4}(s^{2}-R_{1}^{2})}ds=\frac{R_{1}}{e^{\frac{m}{4}R_{1}^{2}}}\int^{R_{1}}_{r} \frac{2}{ms^{2}}(e^{\frac{m}{4}s^{2}})'ds\\
		&\leq \frac{R_{1}}{e^{\frac{m}{4}R_{1}^{2}}}\frac{2}{m}\bigg[\frac{1}{s^{2}}e^{\frac{m}{4}s^{2}}\big|^{s=R_{1}}_{r}+\frac{2}{r^{2}}\int^{R_{1}}_{r}\frac{1}{s}e^{\frac{m}{4}s^{2}}ds\bigg]\\
		&\leq \frac{2}{m}\frac{1}{R_1}+\frac{4}{m}\frac{1}{r^{2}}\int^{R_{1}}_{r}\frac{R_{1}}{s}e^{\frac{m}{4}(s^{2}-R_{1}^{2})}ds.\\
	\end{align*}
	When $R$ is sufficiently large and $r\geq \sqrt{\frac{4}{m}}$, the coefficient of the integral on the right-hand side is strictly less than $1$. Consequently, there exists a constant $\widetilde{C_{1}}$ depending only on $n$, $m$, and $M$, such that 
	\begin{align*}
		\int_{r}^{R_{1}}\frac{R_{1}}{s}e^{\frac{m}{4}(s^{2}-R_{1}^{2})}ds\leq \frac{\widetilde{C}_{1}}{R}.
	\end{align*}
	For $n=3$, a direct computation yields
	\begin{equation*}
		\int_{r}^{R_{1}}\frac{R_{1}^{2}}{s^{2}}e^{\frac{m}{4}(s^{2}-R_{1}^{2})}ds\leq\frac{R_{1}^{2}}{e^{{\frac{m}{4}}R_{1}^{2}}}\frac{1}{r}\int^{R_{1}}_{r}\frac{1}{s}e^{\frac{m}{4}s^{2}}ds\leq \frac{R_{1}^{2}}{e^{{\frac{m}{4}}R_{1}^{2}}}\frac{1}{r}\frac{C_{1}}{R}\leq \frac{\widetilde{C}_{2}}{R},
	\end{equation*}
	where $\widetilde{C}_{2}$ is a constant depending on $n$, $m$, and $M$.
	
	Proceeding by induction on $n$, we obtain
	\begin{align*}
		\int_{r}^{R_{1}}(\frac{R_{1}}{s})^{n}e^{\frac{m}{4}(s^{2}-R_{1}^{2})}ds\leq \frac{\widetilde{C}_{n}}{R}.
	\end{align*}
	Combining this with \eqref{eq2-8} and Lemma~\ref{lemma2}, we obtain the desired estimate.
\end{proof}

\begin{lemma}\label{lemma4}
	Suppose that $g_{R}(r)$ attains its maximum at $r_{R}$. Then  $g_{R}(r_{R})\leq \frac{C_2}{R}$ and $r_{R}\rightarrow +\infty$ as $R\rightarrow +\infty$.
\end{lemma}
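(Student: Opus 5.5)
Both claims will be read off Lemma~\ref{lemma3}, with a short ODE comparison for the location of the maximum. Recall the picture: $\Gamma_R$ starts at $(0,R)$ moving in the positive $x$-direction with $\theta=0$. Since $R>\tilde r$ for large $R$, the curve bends downward. Viewed as the graph $x=g_R(r)$, it has $g_R(R)=0$ and $g_R'<0$ just below $R$. The function $g_R$ increases as $r$ decreases from $R$ until it reaches its maximum $r_R$. By Lemma~\ref{lemma1}(i) this maximum is unique (a vertical point of $u_R$).

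\textbf{Step 1: $r_R\to+\infty$.} This is the main step, and I treat it first because it also gives the bound on $g_R(r_R)$. I argue by contradiction. Suppose that along a sequence $R_k\to\infty$ the points $r_{R_k}$ stay bounded, say $r_{R_k}\le A$. Then on $[\max(A,\sqrt{4/m}),R_1]$ the curve is a graph $x=g_R(r)$ with $g_R\ge 0$ and $g_R'<0$. By Lemma~\ref{lemma3}, $0\le g_R\le C_2/R$ there.

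Now integrate \eqref{eq1.4} in the form used in Lemma~\ref{lemma3}: $(-\log\cos\theta_R)'\le \frac{n-1}{r}-\frac{m}{2}r$, from $r$ up to $R_1$. This gives
\[
\cos\theta_R(r)\le \Big(\frac{R_1}{r}\Big)^{n-1}e^{\frac m4(r^2-R_1^2)}.
\]
For $r$ of order $R/2$, say, the right-hand side is exponentially small, so $\theta_R(r)$ is exponentially close to $-\pi/2$. Hence $g_R'(r)=\cot\theta_R$ is exponentially small, and this by itself gives no contradiction.

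The useful direction is different. The term $-\frac{f'}{2}g$ in \eqref{eq1.4} is $O(1/R)$, because $g\le C_2/R$. The term $(\frac{rf'}{2}-\frac{n-1}{r})g'$ has the sign of $g'$ for $r>\sqrt{2(n-1)/m}$, and it drives $g'$ towards $0$. Reaching $g'=0$ requires $\frac{d\varphi}{dr}$ to vanish, i.e. $(\frac{rf'}{2}-\frac{n-1}{r})g'=\frac{f'}{2}g$. This can only happen where $|g'|\lesssim \frac{M}{m}\frac{g}{r}\lesssim \frac{C}{rR}$. So I need to show that this balance is reached at some $r\ge \rho(R)$ with $\rho(R)\to\infty$.

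\textbf{Step 2: locating the balance.} I fix a large constant $B$ and work on $[B,R_1]$. Set $\psi=-g_R'\ge 0$. Then \eqref{eq1.4} gives
\[
\psi'\ \ge\ (1+\psi^2)\Big[\Big(\tfrac{mr}{2}-\tfrac{n-1}{r}\Big)\psi-\tfrac{M}{2}\tfrac{C_2}{R}\Big].
\]
Hence, going downward in $r$, $\psi$ decays at least like $e^{-\frac m4(R_1^2-r^2)}$ until it becomes of size $C/(rR)$.

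Because of this exponential decay, $\psi$ reaches that threshold already at $r=R-O(\sqrt{\log R}/R)$, which is certainly $\ge B$. Below that point the forcing term $-\frac{f'}{2}g<0$ takes over. It pushes $\varphi$ past $0$, so that $g'>0$; in other words, $g$ has reached its maximum. A short Riccati-type comparison on an interval of length $O(1)$ then shows that $g'$ changes sign there. Therefore $r_R\ge B$ for all large $R$. Since $B$ is arbitrary, $r_R\to+\infty$. I expect this comparison, which shows that the sign change actually occurs before $r$ drops below $B$, to be the main obstacle.

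\textbf{Step 3: the bound on $g_R(r_R)$.} Once $r_R\ge\sqrt{4/m}$ for large $R$, Lemma~\ref{lemma3} applied at $r=r_R$ gives $g_R(r_R)\le C_2/R$ immediately.
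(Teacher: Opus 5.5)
The proposal has a genuine gap. The decisive claim, that $g_R'$ vanishes before $r$ drops below $B$, is asserted but not proved; you defer it yourself as ``the main obstacle.''

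\textbf{What goes wrong in Step 2.} Your verbal mechanism is right: the forcing pushes $\varphi$ past $0$. But the formal setup contradicts it.

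First, at a zero of $g'$, \eqref{eq1.4} gives $\frac{d\varphi}{dr}=-\frac{f'}{2}g<0$, so reaching the maximum does not require $\frac{d\varphi}{dr}=0$. On the branch $g>0$, $g'<0$, $r>\sqrt{2(n-1)/m}$, both terms on the right of \eqref{eq1.4} are negative, so your ``balance'' never occurs.

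Second, with $\psi=-g_R'$ the equation reads
$\psi'=(1+\psi^2)\big[(\frac{rf'}{2}-\frac{n-1}{r})\psi+\frac{f'}{2}g\big]$.
The forcing enters with a plus sign and makes $\psi$ decrease as $r$ decreases. By bounding it below by $-\frac{M}{2}\frac{C_2}{R}$, you discarded exactly the term that drives $\psi$ to zero. From your inequality, nothing forces $\psi$ below size $C/(rR)$.

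Third, in Step 1 you dismissed the exponential smallness of $g_R'$ as giving no contradiction. It does give one once it is measured relative to $g_R$, but that requires a lower bound on $g_R$, which your argument never uses.

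\textbf{How to close it.} The paper does exactly this. It normalizes $\bar g_k=g_{R_k}/g_{R_k}(\bar M)$ and uses $g_{R_k}(\bar M)>g_{R_k}(R_k-\frac{1}{R_k})\ge C_1/R_k$ from Lemma~\ref{lemma2}. Estimate \eqref{eq2-7} then gives $\bar g_k'\to0$ uniformly on $[\bar M,\bar M+1]$, hence $\bar g_k\to1$. But the normalized equation forces $\bar g_k''\le-\frac{m}{2}+o(1)$ there, which is a contradiction.

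Alternatively, the Riccati comparison you anticipated can be done directly, with the correct sign. On the branch above, \eqref{eq1.4} gives $g''\le-\frac{m}{2}(1+(g')^2)g\le-\frac{m}{2}g$. So $w=-g'/g>0$ satisfies $w'\ge\frac{m}{2}+w^2$, i.e.\ $\frac{d}{dr}\arctan\big(w\sqrt{2/m}\big)\ge\sqrt{m/2}$. Since this arctan stays in $(0,\frac{\pi}{2})$, the branch has length at most $\pi/\sqrt{2m}$. Hence $r_R\ge R-\pi/\sqrt{2m}$ once $R-\pi/\sqrt{2m}>\sqrt{2(n-1)/m}$. This locates the maximum without Lemma~\ref{lemma2} or Lemma~\ref{lemma3}.

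Once $r_R\to\infty$ is established, your Step 3 is correct and matches the paper.
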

\begin{proof}
	If $r_{R}\geq \sqrt{\frac{4}{m}}$, then $g_{R}(r_{R})\leq \frac{C_2}{R}$ follows from Lemma~\ref{lemma3}. Assume, for contradiction, that there exist a constant $M_1>0$ and a sequence $R_k\to+\infty$ such that the maximum point $r_{R_k}$ satisfies $r_{R_k}\leq M_1$. Define
	$$\bar g_k(r) := \frac{g_{R_k}(r)}{g_{R_k}(\bar M)},
	\qquad \bar M := \max\left\{\sqrt{\frac{4}{m}},\, M_1\right\}.$$
	
	Consider $\bar{g}_{k}$ on the interval $$I=[\bar{M}, R_k-\frac{1}{R_{k}}).$$  By Lemma~\ref{lemma1}, we have $\bar{g}'_{k}<0$ on $I$. Hence $0<\bar{g}_{k}\leq 1$. By Lemma~\ref{lemma2},  $$\frac{C_{1}}{R_{k}}\leq g_{R_{k}}(R_{k}-\frac{1}{R_{k}})<g_{R_{k}}(\bar{M}).$$
	For $r\in I$, combining this estimate with \eqref{eq2-7} gives
	\begin{equation*}
		0>\bar{g}_{k}'(r)=\frac{g_{R_{k}}'(r)}{g_{R_{k}}(\bar{M})}\geq \frac{\cos\theta_1}{\sin\theta_2}\frac{R_{k}}{C_1}(\frac{R_{k,1}}{r})^{n-1}e^{\frac{m}{4}(r^{2}-R_{k,1}^{2})},
	\end{equation*}
	where $R_{k,1}:=R_k-\frac{1}{R_k}$. It follows that $\bar{g}_{k}'\to 0$ uniformly on $[\bar M, \bar M+1)$ as $k\rightarrow+\infty$. By the Arzel\`a--Ascoli  theorem, after passing to a subsequence, the sequence $\{\bar{g}_{k}\}$ converges uniformly to the constant function $\bar{g}\equiv1$ on $[\bar M, \bar M+1)$.
	
	Passing to the limit in the equation for $\bar{g}_k$, we find that $\bar{g}$ satisfies
	\begin{equation}\label{eq2-9}
		\bar{g}''=(\frac{r}{2}f'-\frac{n-1}{r})\bar{g}'-\frac{f'}{2}\bar{g}.
	\end{equation}
	However, the constant function $\bar{g}\equiv1$ does not satisfy \eqref{eq2-9} on $[\bar M, \bar M+1)$, a contradiction.
\end{proof}

The following lemma can also be proved using the method of Lemma 3 in \cite{Drugan2013ImmersedS}.

\begin{lemma}\label{lemma5}
	Let $g_R$ be the solution of \eqref{eq1.4} satisfying $$g(R)\geq 0, \quad g'(R)>0.$$  Then for sufficiently small $R$, there exists $\bar{R}>R$ such that $g'_R(\bar{R})=0$. Moreover $\bar{R}\to 0$ as $R\to0$.
\end{lemma}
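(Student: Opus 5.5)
We argue by rescaling. The idea is that for small $r$ the drift terms in \eqref{eq1.4} involving $f'$ become negligible compared with the singular term $\frac{n-1}{r}g'$. Set $\rho=r/R$ and $h(\rho)=g_R(R\rho)/R$. Then $h'(\rho)=g_R'(R\rho)$ and $h''(\rho)=Rg_R''(R\rho)$, so \eqref{eq1.4} becomes
\begin{equation*}
	\frac{h''}{1+(h')^2}=\Big(R^2\rho\frac{f'}{2}-\frac{n-1}{\rho}\Big)h'-R^2\frac{f'}{2}h,
\end{equation*}
with $f'$ evaluated at $R^2(\rho^2+h^2)$, and hence bounded between $m$ and $M$. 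The initial data become $h(1)=g_R(R)/R\geq 0$ and $h'(1)=g_R'(R)>0$.

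As $R\to0$, the drift terms formally disappear, leaving the catenoid-type equation
\begin{equation*}
	\frac{h''}{1+(h')^2}=-\frac{n-1}{\rho}h'.
\end{equation*}
This is the profile equation for the $n$-dimensional catenoid written as a graph $x=h(\rho)$ over the radial axis. The solution starting at $\rho=1$ with positive slope turns over, reaching $h'=0$ at some finite $\bar\rho$, because catenoid profiles have a vertical point in the $g$-picture. Indeed, with $\varphi$ the angle, we have $(\sin\varphi)'=-\frac{n-1}{\rho}\sin\varphi\cdot\ldots$; more precisely, one gets $\frac{d}{d\rho}\big(\rho^{n-1}\frac{h'}{\sqrt{1+h'^2}}\big)=0$. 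Then $\frac{h'}{\sqrt{1+h'^2}}=c\rho^{1-n}$, which would need to exceed $1$ as $\rho$ decreases, a contradiction. So I need the correct direction. In the paper's orientation $g_R$ is increasing at $R$ and has a critical point $\bar R>R$, so the solution is continued to larger $r$. I then need $g'$ to reach $0$ at finite $\bar r$.

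Directly from \eqref{eq1.4}, $\varphi$ satisfies $\varphi'\leq(\frac{rM}{2}-\frac{n-1}{r})\tan\varphi$ while $g\geq0$ and $g'>0$. The $-\frac{f'}{2}g$ term only helps. Dividing as in the proof of Lemma~\ref{lemma2} gives
\begin{equation*}
	(\log\sin\varphi)'\leq \frac{rM}{2}-\frac{n-1}{r},
\end{equation*}
so
\begin{equation*}
	\sin\varphi(r)\leq \Big(\frac{R}{r}\Big)^{n-1}e^{\frac M4(r^2-R^2)}.
\end{equation*}
Take $r=\lambda R$ with $\lambda=2^{1/(n-1)}\cdot 2$, say. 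For $R$ small the right side is less than $\tfrac12\cdot e^{O(R^2)}<1$. Hence $\varphi$ is strictly less than $\pi/2$, so $g'$ stays bounded and the graph does not become vertical prematurely. The goal, however, is to show that $g'$ hits $0$, not just that it stays bounded.

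To get this, I use the $\rho^{n-1}$ decay quantitatively. As long as $g'>0$ and $g\geq0$, the bound gives $g'(r)\leq C(R/r)^{n-1}$ on $[R,\,cR^{?}]$. This controls $g\leq g(R)+CR$ on such a range (with a $\log$ correction when $n=2$). I then feed this back into the $-\frac{f'}{2}g$ term: once $r\sim R^{1/2}$ or so, this term dominates $\frac{n-1}{r}g'$. That yields $\varphi'\leq -\frac{m}{2}g+\ldots$ and a definite negative push, which forces $\varphi$ to reach $0$ within an $r$-interval whose length tends to $0$. Since $g\ge0$ is needed for the sign, I must ensure $g$ does not become negative beforehand. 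This holds because $g$ is increasing while $g'>0$.

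The main obstacle is the case $g(R)=0$ (or $g(R)$ tiny) with very small $g'(R)$. There, the push from $-\frac{f'}{2}g$ is weak, and one must show that $g$ grows enough for the turning to happen at some $\bar R=o(1)$. I would handle this by comparison with the linearization. Write $g'$ small, use $\varphi\approx g'$, and note that $w=g'$ satisfies $w'\approx -\frac{n-1}{r}w-\frac{f'}{2}g$ with $g'=w$. This behaves like the radial equation $g''+\frac{n-1}{r}g'+\frac{f'}{2}g\le 0$-type, whose solutions oscillate on scales $O(1)$ only. That is the delicate point: turning happens at $\bar R=O(1)$ unless the initial data are nondegenerate. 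I would therefore use the hypothesis $g(R)\ge0$ together with the first integral estimates to show $\bar R\le C\sqrt{R}$, and I expect this step to require care.
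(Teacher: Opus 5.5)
\textbf{There is a genuine gap.} The step that forces $g_R'$ to vanish is never carried out, and the estimates you record are not the ones that step needs.

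The rescaled catenoid limit cannot produce the turning point. There $\rho^{n-1}\sin\varphi$ is constant, so $\sin\varphi$ decays but never vanishes. The turning comes from the term $-\frac{f'}{2}g$, at a scale much larger than $R$. For that term to give a definite negative push you need a \emph{lower} bound on $g_R$ for $r\ge 2R$. You only derive the upper bound $g\le g(R)+CR$.

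Your bound $\sin\varphi(r)\le (R/r)^{n-1}e^{\frac M4(r^2-R^2)}$ also discards the factor $\sin\varphi_R(R)$. That is exactly why small $g'(R)$ looks like an obstacle to you. The missing idea is homogeneity in the size of the data:
\begin{enumerate}
\item[(a)] Keep the factor: $\sin\varphi_R(r)\le (R/r)^{n-1}e^{\frac M4(r^2-R^2)}\sin\varphi_R(R)$.
\item[(b)] Prove the matching lower bound $\sin\varphi_R\ge c\,\sin\varphi_R(R)$ on $[R,2R]$. The paper obtains this in \eqref{eq2-12}--\eqref{eq2-14}.
\item[(c)] Deduce $g_R(r)\ge g_R(2R)\ge C_4R\sin\varphi_R(R)$ for $r\ge 2R$, as in \eqref{eq2-15}.
\end{enumerate}
Then $g_R$ and $g_R'$ both carry the factor $\sin\varphi_R(R)$, and it drops out.

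The paper exploits this through the normalization $\tilde g_R=g_R/g_R(d/2)$ on $[d/2,d]$. There $\tilde g_R'\lesssim R^{n-2}r^{1-n}\to0$ for $n>2$. On the other hand, $-\frac{f'}{2}\tilde g_R$ with $\tilde g_R\ge1$ forces $\tilde g_R'$ to drop by a fixed amount across the interval, a contradiction.

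With the same two bounds, your direct route also closes. For $r<\sqrt{2(n-1)/M}$, both terms on the right of \eqref{eq1.4} are nonpositive when $g,g'\ge0$. So $\varphi'\le-\frac m2 C_4R\sin\varphi_R(R)$ beyond any small $r_1\ge 2R$. Hence $\varphi$ reaches $0$ before $r_1+\frac{\pi}{mC_4}e^{\frac M4 r_1^2}R^{n-2}r_1^{1-n}$. Choosing $r_1=R^{(n-2)/n}$ gives $\bar R\lesssim R^{(n-2)/n}$ for $n\ge3$.

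\textbf{Your quantitative expectations are also off.} Degenerate data do not push the first critical point out to scale $O(1)$. Linearize with $g(R)=0$, $g'(R)=\epsilon$, and let $\phi(r)=1-\frac{f'(0)}{4n}r^2+\cdots$ be the regular solution. Then $g\approx\frac{\epsilon R}{n-2}\bigl(\phi(r)-(R/r)^{n-2}\bigr)$. So $g'$ vanishes where $\bar R^{\,n}\approx\frac{2n(n-2)}{f'(0)}R^{n-2}$, independently of $\epsilon$. The $O(1)$ oscillation scale you mention governs the zeros of $g$, not the first zero of $g'$.

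Consequently $\bar R\sim R^{1/3}$ for $n=3$ and $\bar R\sim(\log(1/R))^{-1/2}$ for $n=2$. The bound $\bar R\le C\sqrt R$ you aim for is false in those dimensions. For $n=2$ the bound $g_R\gtrsim R\sin\varphi_R(R)$ is not enough either. You would need the logarithmic growth $g_R(r)\gtrsim R\sin\varphi_R(R)\log(r/R)$ to get any decay of $\bar R$.
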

\begin{proof}
	We argue by contradiction. Suppose that, for arbitrarily small $R$, there exists $d>R$ such that $g'_R(r)>0$ on $(R,d)$. The proof consists of two steps.
	
	\medskip
	\textbf{Step 1.}
	Equation \eqref{eq1.4} gives
	\begin{equation}\label{eq2-10}
		\frac{d\varphi}{dr}\frac{\cos\varphi}{\sin\varphi}<\frac{Mr}{2}-\frac{n-1}{r}.
	\end{equation}
	Since $\varphi_R(R)=\frac{\pi}{2}$, integrating the above inequality from $R$ to $2R$ gives
	\begin{equation}\label{eq2-11}
		\sin\varphi_R(2R)<e^{\frac{3M}{4}R^2}(\frac{1}{2})^{n-1}\sin\varphi_R(R).
	\end{equation}
	
	To estimate $g_R(2R)$, let $x_3=g_R(2R)$. Then equation \eqref{eq1.3} gives
	\begin{equation*}
		\varphi_R(R)>\int_{0}^{x_3}\frac{d\theta_R}{dx}>(\frac{n-1}{2R}-MR)x_3.
	\end{equation*}
	Thus, for small enough $R$, there exists a constant $C_3>0$, depending only on $n$, such that
	\begin{equation}\label{eq2-12}
		g_R(2R)<C_3R\varphi_R(R).
	\end{equation}
	
	We next show that $g'_R(2R)>R\sin\varphi_R(R)$ for sufficiently small $R$. Suppose not. Then there exists $s\in(R,2R)$ such that $g'_R(s)=R\sin\varphi_R(R)$. Since $g''(r)<0$ for $r\in(R,2R)$, we have $g'_R(r)>R\sin\varphi_R(R)$ for $r\in(R,s)$. Then equations \eqref{eq1.4} and \eqref{eq2-12} give
	\begin{equation*}
		\frac{d\varphi_R}{dr}\frac{\cos\varphi_R}{\sin\varphi_R}>\frac{mr}{2}-\frac{n-1}{r}-\frac{MC_3}{2}\frac{\varphi_R(R)}{\sin\varphi_R(R)}.
	\end{equation*}
	Integrating this inequality on $(R,s)$ yields
	\begin{equation}\label{eq2-13}
		\sin\varphi_R(s)
		>e^{\frac{m}{4}(s^2-R^2)}(\frac{R}{s})^{n-1}e^{-\frac{M}{2}C_3(s-R)\frac{\varphi_R(R)}{\sin\varphi_R(R)}}\sin\varphi_R(R).			
	\end{equation}
	For sufficiently small $R$, this contradicts the equality $g'_R(s)=R\sin\varphi_R(R)$.  Therefore, $g'_R(2R)>R\sin\varphi_R(R)$. Using this estimate and arguing as in the derivation of \eqref{eq2-13}, we obtain
	\begin{equation*}
		\sin\varphi_R(2R)
		>e^{\frac{3m}{4}R^2}(\frac{1}{2})^{n-1}e^{-\frac{M}{2}C_3R\varphi_R(R)}\sin\varphi_R(R).			
	\end{equation*}
	Combining this inequality with \eqref{eq2-11} gives 
	\begin{equation*}
		e^{\frac{3m}{4}R^2}(\frac{1}{2})^{n-1}e^{-\frac{M}{2}C_3R\varphi_R(R)}\sin\varphi_R(R)<\sin\varphi_R(2R)<e^{\frac{3M}{4}R^2}(\frac{1}{2})^{n-1}\sin\varphi_R(R),
	\end{equation*}
	for sufficiently small $R$. Thus, as $R\to0$, 
	\begin{equation}\label{eq2-14}
		\sin\varphi_R(2R)\to 2^{1-n}\sin\varphi_R(R).
	\end{equation}

	Now we can estimate $g_R(2R)$ from below. Equation \eqref{eq1.4} implies that $g''<0$ on $[R,d]$ for sufficiently small $d>0$. Then we have
	\begin{equation*}
		g_R(2R)-g_R(R)=\int_{R}^{2R}g'_R(r)dr>C_4R\sin\varphi_R(R).
	\end{equation*}
	This gives
	\begin{equation}\label{eq2-15}
		g_R(2R)>C_4R\sin\varphi_R(R).
	\end{equation}
	
	Since $g_R>0$ and $g'_R>0$ by assumption, equation \eqref{eq1.4} implies that $g''_R<0$ for $r>R$. Thus $\varphi_R(r)<\varphi_R(2R)$ for $r>2R$. Then
	\begin{equation}\label{eq2-16}
		\frac{1}{\cos\varphi_R(r)}<\frac{1}{\cos\varphi_R(2R)}.
	\end{equation}
	
	Integrating \eqref{eq2-10} from $R$ to $r$ gives
	\begin{equation*}
		\sin\varphi_R(r)<e^{\frac{M}{4}(r^2-R^2)}(\frac{R}{r})^{n-1}\sin\varphi_R(R).
	\end{equation*}
	Combining this with \eqref{eq2-16}, we have the following estimate for $r>2R$:
	\begin{equation}\label{eq2-17}
		g'_R=\frac{\sin\varphi_R}{\cos\varphi_R}<\frac{\sin\varphi_R(R)}{\cos\varphi_R(2R)}e^{\frac{M}{4}(r^2-R^2)}(\frac{R}{r})^{n-1}.
	\end{equation}
	
	\textbf{Step 2.} We now complete the proof. On $(\frac{d}{2},d)$, define
	\begin{equation*}
		\tilde{g}_R=\frac{g_R}{g_R(\frac{d}{2})}.
	\end{equation*}
	Equation \eqref{eq2-14} implies that $\cos\varphi_R(2R)$ is bounded away from $0$. For $r\in[\frac{d}{2},d]$, combining this with \eqref{eq2-15} gives
	$$\tilde{g}'_R=\frac{g'_R}{g_R(\frac{d}{2})}<\frac{g'_R}{g_R(2R)}<\frac{C_4}{\cos\varphi_R(2R)}e^{\frac{M}{4}(r^2-R^2)}(\frac{R}{r})^{n-2}\to 0,$$ 
	as $R\to 0$ for $n>2$. Equations \eqref{eq2-14}, \eqref{eq2-15} and \eqref{eq2-17} imply that $\tilde{g}''_R$ is uniformly bounded on $[\frac{d}{2},d]$. We then obtain a contradiction by the same argument used in the proof of Lemma~\ref{lemma4}, when $n>2$. Therefore, there exists $\bar{R}\in(R,d)$ such that $g'_R(\bar{R})=0$. Since $d$ can be chosen arbitrarily small, it follows that $\bar{R}\to0$ as $R\to0$. Moreover, $g_R$ attains its maximum at $\bar{R}$. By continuity with respect to the parameter $n$,  the same conclusion holds for $n=2$. 
\end{proof}

\begin{defn}
	Define 
	$$t_1(R):=\inf \{t>0: \bigl(x_R(t)=0 \text{ or } r_R(t)=0\bigr)
	\text{ and }
	\theta_R(t)\geq -\pi\}.$$
	If no such $t_1(R)$ exists, we define
	$$t_1(R):=\inf \{t>0: \theta_{R}(t)=-\pi\}.$$
\end{defn}

We now prove that, for sufficiently large $R$, the graph of $g_R$ intersects the $r$-axis at a point whose $r$-coordinate is bounded below by a positive constant independent of $R$.

\begin{lemma}\label{lemma6}
	There exists a constant $\delta>0$, independent of $R$, such that, for sufficiently large $R$, $$x_{R}(t_{1}(R))=0 \quad and \quad r_{R}(t_{1}(R))\geq \delta.$$
\end{lemma}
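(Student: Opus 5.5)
We follow the curve $\Gamma_R$ from $(0,R)$ in the order the earlier lemmas describe it, and show that it meets the $r$-axis before $\theta_R$ reaches $-\pi$ and before $r_R$ can vanish.

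\textbf{Step 1 (first part of the curve).} Starting at $(0,R)$ with $\theta_R(0)=0$, the curve is a decreasing graph $r=u_R(x)$ for small $x>0$, since $R>\tilde r$. By Lemma~\ref{lemma2} it becomes vertical at a point with $r<R$. From there it is a graph $x=g_R(r)$ with $g_R'<0$, traced with decreasing $r$. Lemma~\ref{lemma1}(i) gives at most one critical point, which is a maximum. So as $r$ decreases below $r_R$, the function $g_R$ is increasing in $r$, and it continues until either $g_R$ hits $0$ or the graph over the $r$-axis breaks down through a horizontal point of $g$.

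\textbf{Step 2 (the $g$-graph persists down to a fixed radius).} We want $\delta>0$ such that, for large $R$, the graph $g_R$ stays defined and positive on an interval reaching down to $\delta$, or reaches $g_R=0$ at some $r\ge\delta$.

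\begin{itemize}
\item By Lemma~\ref{lemma4}, $r_R\to\infty$ and $0<g_R\le C_2/R$ on $[r_R,R]$.
\item For $r<r_R$ we have $g_R'>0$ and $g_R\ge0$. Lemma~\ref{lemma1}(ii) then says that once $g_R''<0$, concavity persists as $r$ decreases.
\item Near $r_R$ we have $g_R''<0$, because $g_R'=0$ and $g_R>0$ force $g_R''=-\tfrac{f'}{2}g_R(1+0)<0$. Hence $g_R$ is concave on the whole interval below $r_R$ where it is defined and nonnegative.
\item With $g_R(r_R)\le C_2/R$, concavity and positivity give $g_R'(r)\le g_R(r_R)/(r_R-r)$ when $g_R$ stays positive. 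This is small for $r$ in a fixed compact range, say $[\delta,r_R/2]$.
\item Consequently $\varphi$ stays bounded away from $\pi/2$ there. In particular no vertical point of $g$ (horizontal point of $u$) occurs, and $\theta_R>-\pi$ throughout.
\end{itemize}

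\textbf{Step 3 (forcing an intersection with the $r$-axis).} Suppose $g_R>0$ on $[\delta,r_R]$ for a sequence $R\to\infty$. Normalize as in Lemma~\ref{lemma4} by setting $\bar g=g_R/g_R(r_R)$, so that $0<\bar g\le1$ and $\bar g'$ is small. On compact subsets of $(\delta,\infty)$, Arzel\`a--Ascoli gives a limit that is a nonnegative solution of the linear equation \eqref{eq2-9} (with $f'$ evaluated at $r^2$, since $g\to0$), defined on $[\delta,\infty)$, with $\bar g\le1$ and $\bar g'\ge0$.

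We then show that such a solution must vanish somewhere in $[\delta,\infty)$ when $\delta$ is small. The comparison is with the Hermite-type linear equation $g''=(\tfrac{r}{2}f'-\tfrac{n-1}{r})g'-\tfrac{f'}{2}g$, which behaves like $\tfrac{n-1}{r}$-singular at $0$: a solution bounded at infinity with $g'\ge0$ must cross zero at some positive radius. This is the analogue of Angenent's argument, where the limit is the Hermite polynomial-type solution $r^2-2(n-1)$. Such a zero contradicts positivity of $g_R$ for large $R$, because uniform convergence would force $g_R$ to vanish too.

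Once a zero $r_0\ge\delta$ is found, $x_R(t_1)=0$ and $r_R(t_1)\ge\delta$ follow from the definition of $t_1$. The reason is that Step 2 shows $\theta_R>-\pi$ and $r_R>0$ up to that time.

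\textbf{Main obstacle.} The hardest step is Step 3: showing that the limiting linear ODE solution has a zero at a radius bounded below. This needs $f'$ bounded between $m$ and $M$ and a Sturm comparison with the constant-coefficient cases. The comparison must also handle the $\tfrac{n-1}{r}$ singularity at the origin, to exclude the zero escaping to $r=0$. The plan is to get this from a lower barrier built from the case $f'\equiv M$. Its solution vanishes at $r=\sqrt{2(n-1)/M}$, so we take $\delta$ below that value.
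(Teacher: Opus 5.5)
Your Angenent-type linearization is a different route from the paper's, which never passes to a limiting linear equation. There, $\delta$ comes from Lemma~\ref{lemma5} applied near small $r$: it would push the unique critical point of $g_R$ (Lemma~\ref{lemma1}(i)) toward $0$, against $r_R\to\infty$ from Lemma~\ref{lemma4}. The smallness in Lemma~\ref{lemma3} is then used to exclude $\theta_R=-\pi$ at a point with $x_R>0$. Your route is viable in principle, but Steps~2 and~3 have genuine gaps.

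In Step~2 the concavity estimate is reversed. For $g_R$ concave with $g_R'\ge0$, the slope is largest at the \emph{lower} end. What concavity gives is $g_R'(r)\le \frac{g_R(r)-g_R(a)}{r-a}\le\frac{g_R(r_R)}{r-a}$ for $a<r$ in the domain with $g_R(a)\ge0$. Your bound $g_R(r_R)/(r_R-r)$ fails exactly in the scenario you must exclude. If the curve turns horizontal ($\theta_R=-\pi$ with $x_R>0$) at some $r=a\ge\delta$, then $g_R'\to+\infty$ as $r\downarrow a$, while $g_R(r_R)/(r_R-r)$ stays bounded. So ``no vertical point of $g$ and $\theta_R>-\pi$ down to $\delta$'' presupposes that the graph extends below $\delta$, which is essentially what the lemma asserts. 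Step~3, which assumes $g_R>0$ on $[\delta,r_R]$, never treats the turning case.

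In Step~3, normalizing by $g_R(r_R)$ with $r_R\to\infty$ makes the limit trivial. With your own bound, $\bar g'\le 1/(r_R-r)\to0$ on compact sets. So the limit is a constant solving $0=-\frac{f'}{2}\cdot\mathrm{const}$, i.e.\ it is $0$. More intrinsically, a bounded, nonnegative, nondecreasing solution of the limit equation on $[\delta,\infty)$ vanishes identically, because the equation forces $\phi'\gtrsim\phi/r$ for large $r$. The solution that actually governs $g_R$ grows like $r$; for $n=3$, $f'\equiv1$ it is $\frac r2-\frac1r$.

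Moreover, ``a zero of the limit forces $g_R$ to vanish'' is false. A limit of positive functions is only $\ge0$ and may vanish without any $g_R$ vanishing. What contradicts positivity is a limit that is \emph{negative} somewhere in $(\delta,\infty)$.

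A repair runs as follows. First normalize at a fixed radius $r_*$. Then show $g_R/g_R(r_*)$ converges in $C^1$ on compact subsets of $(0,\infty)$ to the slow solution $\phi$. For this, note that the Riccati equation for $w=g_R'/g_R$ starts at $w=0$ at $r_R$ and, as $r$ decreases, is attracted to the branch $w\approx 1/r$. Next prove that $\phi$ changes sign at a fixed $r_0>0$. Finally, $C^1$-closeness near $r_0$ gives a transversal crossing of the $r$-axis with small slope, hence both $x_R(t_1(R))=0$ and $r_R(t_1(R))\ge\delta$ for any $\delta<r_0$.

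Your proposed barrier does not supply the sign change: in the example above the zero is at $\sqrt2$, not at $\sqrt{2(n-1)}=2$. Instead, writing $\phi=r\psi$ gives $(r^{n+1}e^{-f(r^2)/4}\psi')'=-(n-1)r^{n-1}e^{-f(r^2)/4}\psi$. For the slow solution the flux $r^{n+1}e^{-f(r^2)/4}\psi'$ tends to $0$ at infinity. So if $\psi>0$ on $(0,\infty)$, then $\psi'\gtrsim r^{-n-1}$ near $0$ and $\psi\to-\infty$, a contradiction.
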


\begin{proof}
	We first prove $r_R(t_1(R))\geq \delta$. Suppose, for contradiction, that no such $\delta>0$ exists. Then by Lemma~\ref{lemma4}, there exists a sequence $R_{k}\to +\infty$ such that the corresponding solutions $g_{R_{k}}$, defined on $(r_{R_{k}}(t_{1}(R_{k})), R_{k})$, satisfy 
	$x_{R_{k}}(t_{1}(R_{k}))>0$ and $r_{R_{k}}(t_{1}(R_{k}))\to 0$. However, Lemma~\ref{lemma4} implies that $g_{R_{k}}$ attains its unique maximum at $r_{R_{k}}$ with $r_{R_{k}}\to +\infty$.  On the other hand, Lemma~\ref{lemma5} applied near $r=r_{R_{k}}(t_{1}(R_{k}))$ implies that $r_{R_{k}}\to 0$. This contradiction proves that there exists a constant $\delta > 0$ such that $r_R(t_1(R)) \geq \delta.$
	
	We now prove that $x_{R_k}(t_1(R_k))=0$ for sufficiently large $R_k$. If we assume $x_{R_k}(t_1(R_k))>0$ for all $k$, then $\theta_{R_k}(t_1(R_k))= -\pi$. 
	Equation \eqref{eq1.4}, together with
	$r_{R_k}(t_1(R_k))\geq \delta$, 
	implies that $x_{R_k}(t)$ remains uniformly positive near $t=0$. 
	This contradicts the uniform smallness estimate for $g_{R_k}$  established in Lemma~\ref{lemma3}. This completes the proof.
\end{proof}

It is straightforward to verify that there exists an $f$-minimal sphere. If $R>0$ satisfies 
$$R=\sqrt{\frac{2n}{f'(R^2)}},$$
then $u=\sqrt{R^2-x^2}$ solves \eqref{eq1.3}. Since $m\leq f'\leq M$ and $f'$ is continuous, we always have such $R$.
\begin{defn}
	Define $R^*:=\inf\left\{\tilde {R}: \forall R>\tilde{R},
	 x_R(t_1(R))=0  \right\}.$
\end{defn}
The existence of the $f$-minimal sphere and Lemma~\ref{lemma1} imply that $R^*>\tilde{r}$. If the entire graph $r=l(x)$ is the horizontal line $r=\tilde{r}=\sqrt{2(n-1)}$, then $f'(x^2+r^2)=1$ whenever $x^2+r^2\geq 2(n-1)$. Consequently, $g=\sqrt{2n-r^2}$ is a solution of \eqref{eq1.4}, which implies $R^*>\sqrt{2n}$.

\section{Existence of a Symmetric Solution}

Kleene and M{\o}ller \cite{Kleene2010SelfshrinkersWA} used the method of variation of constants to analyze the equations for rotationally symmetric self-shrinkers, namely \eqref{eq1.3} and \eqref{eq1.4} with $f'=1$, and they proved that every solution $u$ defined on $[0,+\infty)$ satisfies $u'>0$. We seek an analogous result in the $f$-minimal setting. 

\begin{lemma}\label{unboundedsolution}
	If $u$ is an unbounded solution of \eqref{eq1.3}, then its maximal interval of existence is $[0,+\infty)$.
\end{lemma}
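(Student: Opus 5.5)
Here $u$ is a solution of \eqref{eq1.3}, viewed as a graph $r=u(x)$ for $x\ge 0$. Writing $(a,b)$ for its maximal interval of existence, the claim is that, if $u$ is unbounded, $a=0$ and $b=+\infty$. The plan is to show that an unbounded solution cannot stop at a finite $b$. For a graph solution of \eqref{eq1.3}, a finite endpoint $b$ can arise in only three ways: $u\to 0$ (the profile hits the axis), $|u'|\to\infty$ (a vertical point), or $u\to\infty$ as $x\to b^-$ (finite-time blow-up). Boundedness of $u'$ and $u$ on compact subintervals, together with $u$ staying away from $0$, would let us continue the solution by standard ODE theory. So we rule out each of the three scenarios when $u$ is unbounded.

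\textbf{Step 1: eventual monotonicity and no return to the axis.} Since $u$ is unbounded, it eventually lies above the curve $r=l(x)$, because $l$ is bounded by $\tilde r$ and is decreasing for $x\ge0$. By Lemma~\ref{lemma1}(iii), every horizontal point above $l$ is a strict local maximum. Hence once $u$ is above $l$ with $u'>0$, it cannot have a horizontal point without afterwards decreasing. I therefore expect to show that, along a sequence where $u$ becomes large, there is a last horizontal point, after which $u'>0$ and $u$ increases. This rules out $u\to0$ near $b$.

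\textbf{Step 2: no vertical point.} Suppose $u'\to+\infty$ as $x\to b^-$ with $u(b)<\infty$. Then near $b$ we switch to the representation $x=g(r)$, which has $g'\to0$ at $r=u(b)$. By Lemma~\ref{lemma1}(i), any critical point of $g$ is a local maximum, so the curve turns back: $x$ attains a local maximum and $g$ continues with $g'<0$. This is consistent with the profile, but it contradicts the assumption that the piece under consideration is a graph beyond $b$ in the $u$-sense only if we know the behaviour of $\theta$. So I would instead use \eqref{eq1.3} directly. For $u$ above $l$ we have $K_x(u)<0$, so
$$\frac{d\theta}{dx}=K_x(u)+\frac{x f'}{2}u'\le \frac{M x}{2}u'.$$
Integrating gives $\theta(x)\le \theta(x_0)+\frac{Mb}{2}(u(x)-u(x_0))$. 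This alone does not bound $\theta$ away from $\pi/2$. The more useful observation is in terms of $\varphi$, via \eqref{eq1.4}: as $\theta\to\pi/2$ we have $g'\to0^+$ and $g>0$ (if $x>0$), so $d\varphi/dr\approx -\frac{f'}{2}g<0$. Thus $g'$ decreases through $0$ and $r$ would reach a maximum there. That means $u$ is bounded on this branch, which contradicts unboundedness unless the curve later rises again. I expect that this is where the argument needs care, and I would handle it via Lemma~\ref{lemma1}(i): $g$ has at most one critical point, so after it $g$ is decreasing and $r$ cannot go to infinity while staying in the region $x>0$.

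\textbf{Step 3: no finite blow-up.} This is the main obstacle. Suppose $u\to\infty$ as $x\to b<\infty$ with $u'>0$. In the $g$-picture, $g$ is increasing with $g\to b$ and $g'\to0$ as $r\to\infty$. Using \eqref{eq1.4} with $g\ge0$ gives
$$\frac{d\varphi}{dr}\cot\varphi\le \frac{rf'}{2}-\frac{n-1}{r}, \qquad\text{so}\qquad \frac{d\varphi}{dr}\le \Big(\frac{Mr}{2}\Big)\tan\varphi-\frac{m}{2}g.$$
Since $g\ge g(r_0)>0$ and $\tan\varphi=g'$ is integrable ($g$ stays bounded by $b$), the inequality $\varphi'\le \frac{Mr}{2}g'-\frac{m}{2}g(r_0)$ integrates to
$$\varphi(r)\le C+\frac{M}{2}\Big(rg-\int g\Big)-\frac{m}{2}g(r_0)\,r.$$
This is not immediately negative, so the first thing I would try is the quantity $rg'-g$, exactly as in \eqref{eq2-2}. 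Monotonicity of $\varphi$ forces $g'$ to be eventually monotone. If $g''\ge0$ with $g'>0$, then $g$ grows at least linearly and is unbounded, a contradiction. If instead $g''<0$, then \eqref{eq1.4} yields $(\frac{rf'}{2}-\frac{n-1}{r})g'<\frac{f'}{2}g$, i.e.\ $rg'\lesssim g+o(1)$. With this in hand, the $-\frac{f'}{2}g\le -\frac m2 g(r_0)$ term dominates, and integrating gives $\varphi\to-\infty$ linearly in $r$. This contradicts $\varphi\in(-\pi/2,\pi/2)$. Hence $b=+\infty$. The left endpoint is handled by symmetry considerations and the definition of the shooting solution starting at $x=0$.
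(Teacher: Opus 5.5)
Your Step~3 carries the whole lemma, and its concave case does not close. Step~2 is unnecessary: if the maximal interval is $[0,b)$ with $b<\infty$ and $u$ is unbounded on it, then $u$ can neither tend to $0$ nor end at a vertical point of finite height, so only blow-up remains. Step~1 announces eventual monotonicity rather than proving it, though the paper also takes monotone blow-up for granted.

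The real problem is the case $g''<0$. There you write $\bigl(\frac{rf'}{2}-\frac{n-1}{r}\bigr)g'<\frac{f'}{2}g$. By \eqref{eq1.4}, this is literally $\frac{d\varphi}{dr}<0$, i.e.\ the hypothesis $g''<0$ restated, so it contains nothing new. It gives $rg'\le g+o(1)$, and hence only $\frac{d\varphi}{dr}\le o(1)$, not $\frac{d\varphi}{dr}\le -c<0$. The terms $\frac{rf'}{2}g'$ and $-\frac{f'}{2}g$ are of the same size and can nearly cancel; for linear $g$ they cancel exactly, leaving only $-\frac{n-1}{r}g'\to0$. So $-\frac{f'}{2}g$ does not ``dominate''. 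What rules out this cancellation is the boundedness of $g$. In this case you use it only through $g\ge g(r_0)$, never through $g\to b$.

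The missing idea is to use $g\to b$. Since $g$ is increasing and concave, $\frac r2\,g'(r)\le g(r)-g(\frac r2)\to0$, so $rg'\to0$. Then $\frac{d\varphi}{dr}\le \frac M2\,rg'-\frac m2\,g(r_0)\le-\frac m4\,g(r_0)$ for large $r$, which forces $\varphi\to-\infty$ and contradicts $0<\varphi<\frac{\pi}{2}$.

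More directly, the integrated inequality you already wrote finishes the proof with no case split. Indeed $rg(r)-\int_{r_0}^r g=r_0\,g(r)+\int_{r_0}^r\bigl(g(r)-g(s)\bigr)\,ds=o(r)$ because $g(s)\to b$. Hence $\varphi(r)\le C+o(r)-\frac m2\,g(r_0)\,r\to-\infty$. You stopped one line short when you called this ``not immediately negative''. If you keep the case split instead, justify the eventual monotonicity of $g'$ by Lemma~\ref{lemma1}(ii), not by an unproved monotonicity of $\varphi$.

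The paper runs the same identity \eqref{eq1.4} in the opposite direction. It asserts $\frac{d\varphi}{dr}\to0$, so that $\frac{f'}{2}g\ge\frac{m\xi}{4}$ forces $g'>B_1/r$, hence $g\ge B_1\log(r/r_0)\to\infty$, contradicting $g<\xi$. Monotone convergence of $\varphi$ alone does not give $\frac{d\varphi}{dr}\to0$. So the route through $rg'\to0$, or through your integrated inequality, is actually the cleaner way to conclude.
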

\begin{proof}
    Suppose, for contradiction, that $u$ is defined on $[0,\xi)$ for some finite $\xi>0$ and that $u(x)\to +\infty$ as $x\to \xi^-$. Lemma~\ref{lemma1} implies that $u''<0$. Hence $u$ is strictly increasing, so that the same curve can be written as a graph $x=g(r)$ satisfying \eqref{eq1.4}. Since $g$ is the inverse function of $u$, we then have $g''<0$ and $\frac{d\varphi}{dr}\to0$ as $r\to+\infty$. Moreover, $g(r)\geq \frac{\xi}{2}$ for all sufficiently large $r$. Thus we may choose $r_0$ sufficiently large so that
    $$\frac{d\varphi}{dr}>-\frac{m\xi}{8},$$
	for $r\geq r_0.$ Using \eqref{eq1.4}, we then obtain
    $$\left(\frac{r}{2}f'-\frac{n-1}{r}\right)g'=
	\frac{f'}{2}g+\frac{d\varphi}{dr}.$$
	It follows that, for $r\geq r_0$,
    $$ \left(\frac{r}{2}f'-\frac{n-1}{r}\right)g'>\frac{m}{2}\frac{\xi}{2}
	-\frac{m\xi}{8}=\frac{m\xi}{8}.$$
	Thus, setting
    $$ B_1:=\frac{m\xi}{4M}>0,$$
	we have $g'(r)>\frac{B_1}{r}$
	for all sufficiently large $r$.	Integrating from $r_0$ to $r$, we have
    $$ g(r)-g(r_0)>B_1\int_{r_0}^{r}\frac{ds}{s}=B_1\log\frac{r}{r_0}.$$
	Hence $g(r)\to+\infty$ as $r\to+\infty.$
	This contradicts $g(r)\to\xi<+\infty.$ Therefore $u$ cannot blow up at a finite value of $x$.
\end{proof}

It is obvious that $u_{1}=x$ solves the following homogeneous equation:
$$u''-\frac{x}{2}f'(1+(u')^{2})u'+\frac{f'}{2}(1+(u')^{2})u=0.$$
Denote $-\frac{x}{2}f'(1+(u')^{2})$ by $p$. A second linearly independent solution is given by
\begin{equation*}
	u_{2}=u_{1}\int_{a}^{x}\frac{1}{u_{1}^{2}}e^{-\int_{a}^{t}p}=x\int_{a}^{x}\frac{1}{t^{2}}e^{\frac{1}{2}\int_{a}^{t}z(1+(u'(z))^{2})f'dz}dt.
\end{equation*}
The general solution of the following inhomogeneous equation
\begin{equation*}
	u''-\frac{x}{2}f'(1+(u')^{2})u'+\frac{f'}{2}(1+(u')^{2})u=-\frac{n-1}{u}(1+(u')^{2})
\end{equation*}
can be written in the form $$u=c_{1}x+c_{2}x\int_{a}^{x}\frac{1}{t^{2}}e^{\frac{1}{2}\int_{a}^{t}z(1+(u'(z)^{2})f'dz}dt+u^{*},$$
where $u^{*}$ is a particular solution of the inhomogeneous equation, and $c_{1}$ and $c_2$ are determined by the initial data. Denote $-\frac{n-1}{u}(1+(u')^{2})$ by $Q$. Then $u^{*}$ is given by 
\begin{equation*}
	u^{*}=u_{2}\int_{a}^{x}\frac{u_{1}Q}{W}-u_{1}\int_{a}^{x}\frac{u_{2}Q}{W},
\end{equation*}
where the Wronskian is $W=e^{\frac{1}{2}\int_{a}^{t}z(1+(u'(z))^{2})f'(z^{2}+(u(z)^{2}))dz}$. Substituting the expression for $u_{2}$ into the formula for $u^*$ and integrating by parts, we obtain
\begin{equation*}
	u^{*}=-(n-1)x\int_{a}^{x}\frac{1}{t^{2}}\big[\int_{a}^{s}s\frac{(1+(u'(s))^2)}{u(s)}e^{\frac{1}{2}\int_{t}^{s}z(1+(u'(z))^{2})f'dz}ds\big]dt
\end{equation*}
Imposing the initial data $u(a)$ and $u'(a)$, we find that $$c_{1}=\frac{u(a)}{a} \qquad c_{2}=u(a)-au'(a).$$ 
We therefore derive the identity
\begin{align*}
	u
	&=\frac{u(a)}{a}x+(u(a)-au'(a))x\int_{a}^{x}\frac{1}{t^{2}}e^{\frac{1}{2}\int_{a}^{t}z(1+(u'(z))^{2})f'dz}\\
	&-(n-1)x\int_{a}^{x}\frac{1}{t^{2}}\big[\int_{a}^{s}s\frac{(1+(u'(s))^2)}{u(s)}e^{\frac{1}{2}\int_{t}^{s}z(1+(u'(z))^{2})f'dz}ds\big]dt.
\end{align*}
Similarly, for $g(r)$, we have
\begin{align*}
	g
	&=\frac{g(a)}{a}r+(g(a)-ag'(a))r\int_{a}^{r}\frac{1}{t^{2}}e^{\frac{1}{2}z\int_{a}^{t}(1+(g'(z))^{2})f'dz}\\
	&-(n-1)r\int_{a}^{r}\frac{1}{t^{2}}\big[\int_{a}^{s}g'(s)(1+(g'(s))^2)e^{\frac{1}{2}\int_{t}^{s}z(1+(g'(z))^{2})f'dz}ds\big]dt.
\end{align*}
Using exactly the same arguments as in Section 2 of \cite{Kleene2010SelfshrinkersWA}, we can prove the $f$-minimal analogues of all the results in that section, except for the strict inequality $u'_{\sigma}>0$.
\begin{prop}[\cite{Kleene2010SelfshrinkersWA}]\label{conicalend}
	Any unbounded solution of \eqref{eq1.3} defined on $[0,+\infty)$ is asymptotic to a line $r=\sigma x$. Denote such a solution by $u_{\sigma}$. We have $u'_{\sigma}\geq0$.
\end{prop}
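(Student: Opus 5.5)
Let $u$ be an unbounded solution of \eqref{eq1.3} on $[0,+\infty)$. The plan has two parts. First, show that $u$ is eventually monotone and that $u(x)/x$ has a finite positive limit $\sigma$. Second, show that $u'\ge 0$ on the whole half-line. Both parts use the variation-of-constants identity for $u$ derived above, together with the qualitative information in Lemma~\ref{lemma1} and the comparison curve $r=l(x)$.

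\textbf{Eventual monotonicity and asymptotics.} Since $u$ is unbounded, Lemma~\ref{lemma1}(iii) controls the horizontal points. Any horizontal point above $r=l(x)$ is a local maximum, and $l\le\tilde r$ is bounded. So once $u$ exceeds $\tilde r$, it cannot have a local minimum there. An unbounded $u$ therefore has $u'>0$ for all $x$ beyond some $x_0$.

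For $x\ge x_0$ I will rewrite \eqref{eq1.3} as
\[
\Bigl(\frac{u}{x}\Bigr)' = \frac{xu'-u}{x^2}.
\]
The right-hand side of \eqref{eq1.3} equals $\frac{n-1}{u}+(xu'-u)\frac{f'}{2}$ with $f'\ge m>0$. The left-hand side $\frac{u''}{1+(u')^2}$ has total integral at most $\pi$, since $\theta$ stays in $(0,\pi/2)$ when $u'>0$. Using these two facts I will show that $xu'-u$ cannot stay bounded away from $0$ on long intervals. A convenient route is to plug $a=x_0$ into the integral identity for $u$: the factor $e^{\frac12\int z(1+u'^2)f'}\ge e^{m(t^2-a^2)/4}$ grows superexponentially. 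Hence a nonzero coefficient in front of it would force either $u\to-\infty$ or $u$ to grow like a Gaussian. The Gaussian case contradicts the bound on the total turning of $\theta$, via an argument like that of Lemma~\ref{unboundedsolution}.

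This forces the combination $c_2-(n-1)\int\cdots$ to vanish in the limit. Consequently $u/x$ converges to a finite $\sigma$, with
\[
xu'-u=O\bigl(\tfrac{1}{x}\bigr) \quad\text{and}\quad u'\to\sigma .
\]
We get $\sigma>0$ because $u$ is unbounded and increasing: if $\sigma=0$, then $u=o(x)$, and the $\frac{n-1}{u}$ term would make $\theta$ increase without bound, which is impossible. This is the part that copies Section 2 of \cite{Kleene2010SelfshrinkersWA}. Only the bounds $m\le f'\le M$ are needed to compare the exponential weights, and $f''\ge0$ enters only through Lemma~\ref{lemma1}.

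\textbf{Global sign $u'\ge0$.} Suppose $u'(x_1)<0$ for some $x_1>0$. Since $u'>0$ eventually, there is a last horizontal point $x_2>x_1$ that is a local minimum. By Lemma~\ref{lemma1}(iii) it lies below $l$, so $u(x_2)<l(x_2)\le\tilde r$.

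I then look at the earlier horizontal point. Either $u$ decreases all the way from $x=0$, or there is a local maximum at some $x_3<x_1$, which must lie above $l$. In the first case I switch to the graph $x=g(r)$ on the decreasing piece. On that piece $g\ge0$ and $g'\le0$, and Lemma~\ref{lemma1}(i),(ii) restrict the convexity of $g$ enough to compare it with the line $x=r\cdot\text{const}$. The integral identity for $g$ with $a=u(x_2)$ then gives a sign contradiction, because $c_2=g(a)-ag'(a)$ and the inhomogeneous term have fixed signs.

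\textbf{Main obstacle.} The hardest step is the strictness issue, and the author explicitly says that the strict inequality fails to transfer. I would therefore aim only for the non-strict conclusion $u'\ge0$. Concretely, I would rule out $u'<0$ on an open interval by the comparison above, allowing tangential zeros of $u'$. The key difficulty is that, unlike the case $f'=1$, the weight $f'(x^2+u^2)$ depends on $u$, so the two homogeneous solutions are not explicit. I would handle this by bounding the weights above and below by $e^{m(\cdot)/4}$ and $e^{M(\cdot)/4}$ in every place where Kleene–Møller use exact Gaussian integrals.
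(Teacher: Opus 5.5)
Your first half, the variation-of-constants analysis with the Gaussian weights controlled by $m\le f'\le M$, is the route the paper takes by importing Section~2 of \cite{Kleene2010SelfshrinkersWA}. The second half, $u'\ge0$ on all of $[0,\infty)$, has no working mechanism, and it is exactly what Lemma~\ref{lemma7} uses.
\begin{itemize}
\item[(a)] You base the $g$-identity at $a=u(x_2)$. But $u'(x_2)=0$, so $r=a$ is a vertical point of $g$: $g'(a)=-\infty$ and $c_2=g(a)-ag'(a)$ is undefined.
\item[(b)] Lemma~\ref{lemma1}(ii) assumes $g'\ge0$, so it says nothing on the piece where $g'\le0$. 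Lemma~\ref{lemma1}(i) is vacuous there.
\item[(c)] Even from a regular base point the two signs do not conflict. From $(rg'-g)'=rg''$ one gets $rg'-g=E(r)\bigl[(ag'(a)-g(a))-(n-1)\int_a^r(1+g'(s)^2)g'(s)E(s)^{-1}\,ds\bigr]$, where $E(r)=\exp\bigl(\tfrac12\int_a^r s(1+g'(s)^2)f'\,ds\bigr)$. So the $c_2$-term enters with a minus sign; the identity as printed has it reversed, which you can check by differentiating at $r=a$. When $g'<0$ this term pushes $g/r$ down while the inhomogeneous term pushes it up. A sign-only argument would also rule out the profile $x=\sqrt{R^2-r^2}$ of the $f$-minimal sphere. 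That profile solves \eqref{eq1.4} with $g>0$, $g'<0$ and reaches $g=0$.
\item[(d)] The case of an interior local maximum at $x_3<x_1$ is never treated.
\end{itemize}
Your Part~1 analysis, run from an arbitrary base point, only gives $xu'-u<0$ on $(0,\infty)$. This is compatible with $u'<0$; at a local maximum, $xu'-u=-u$. What is missing is the global argument the paper imports: the analogues of Lemma~2 and Theorem~3(iv) of Kleene--M{\o}ller, where $f''\ge0$ enters through the monotonicity of $r=l(x)$ combined with Lemma~\ref{lemma1}. There, Lemma~\ref{lemma5} replaces their Lemma~6 and Corollary~4 to control the curve near the axis. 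Your plan uses $l$ only through $l\le\tilde r$ and has no near-axis input.

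Part~1 also has two smaller holes.

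\emph{Eventual monotonicity.} Lemma~\ref{lemma1}(iii) and $l\le\tilde r$ do not give it, since they allow $u$ to oscillate across $l$ with unbounded maxima. You need an extra step. Take a local maximum of height $h$ at $x_*>0$. While $u\ge\sqrt{2(n-1)/m}$ one has $(\arctan u')'\le\frac m2 x_*u'$. So $u$ stays within $\pi/(mx_*)$ of $h$ while $u'<0$ keeps decreasing, which forces a vertical point at finite $x$ once $h$ is large. If there are infinitely many maxima, their heights are unbounded.

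\emph{The claim $\sigma>0$.} Your reason does not work: $\int^\infty dx/u=\infty$ whenever $u\le Cx$, so it cannot single out $\sigma=0$, and $\frac{n-1}{u}$ is largely cancelled by $\frac{f'}{2}(xu'-u)$. What works is the bound $|xu'-u|\le\frac{2(n-1)}{mu}$. This comes from the identity with the bracket vanishing at infinity, once $u$ is increasing. Insert it into $u/x=\sigma+\int_x^\infty|tu'-u|\,t^{-2}\,dt$; for $\sigma=0$ this forces $u^2\le 2(n-1)/m$, contradicting unboundedness.
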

\begin{proof}
	Under the assumptions $0<m\leq f'\leq M$ and $f''\geq0$, the proof follows by the same argument as in \cite{Kleene2010SelfshrinkersWA}. More precisely, the assumption $0<m\leq f'\leq M$ is used throughout the proofs of the corresponding $f$-minimal analogues of the results in \cite{Kleene2010SelfshrinkersWA}, while the assumption $f''\geq 0$ (or, equivalently for the purposes of the proof, the monotonicity of the graph $r=l(x)$ together with Lemma~\ref{lemma1}) is used in proving the analogues of Lemma 2 and Theorem 3(iv) in \cite{Kleene2010SelfshrinkersWA}.
	
	Lemma~\ref{lemma5} can be used in place of Lemma 6 and Corollary 4 in \cite{Kleene2010SelfshrinkersWA}; alternatively, one may use Lemma 3 in \cite{Drugan2013ImmersedS}.
\end{proof}
Unlike the settings considered in \cite{Angenent1992,Cheng2019ExamplesOC,john2017,CLW2024}, it is difficult to prove that $\Gamma_{R}|_{[0,t_1(R)]}$ is convex for $R\geq R^*$. Indeed, Lemma~\ref{lemma1} provides convexity only on one branch of $\Gamma_{R}|_{[0,t_1(R)]}$.  Fortunately, Proposition~\ref{conicalend} and the following lemma ensure that $\Gamma_{R}$ still enjoys sufficiently good properties.

\begin{lemma}\label{lemma7}
For $R\geq R^*$, the curve segment $\Gamma_{R}|_{[0,t_1(R)]}$ is embedded. Moreover,
	$$\sup \{ x_R(t) : 0 < t < t_1(R), R\geq R^{*}\} < +\infty,$$
	and $$\sup \{ r_R(t) : 0 < t < t_1(R), R\geq R^{*}\} < +\infty.$$
\end{lemma}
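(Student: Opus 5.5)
The plan is to control $\theta_R$ on $[0,t_1(R)]$ first and then read off embeddedness and both bounds from that control. Everything rests on the claim that $-\pi<\theta_R(t)<0$ for $0<t<t_1(R)$. At $t=0$ we have $\theta_R=0$. Since $R\ge R^*>\tilde r$, the point $(0,R)$ lies above $r=l(x)$, so $K_0(R)<0$ by \eqref{eq2-1}, and \eqref{eq1.5} gives $\dot\theta_R(0)<0$. By the definition of $t_1(R)$, $\theta_R$ cannot reach $-\pi$ before $t_1(R)$ unless that is the stopping time itself.

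To see that $\theta_R$ does not return to $0$ on $(0,t_1(R))$, I argue as follows. A return to $\theta_R=0$ would produce a horizontal point of $u_R$ that is a local minimum. By Lemma~\ref{lemma1}(iii) such a point lies below $r=l(x)$. Following the curve from there, it is a graph $r=u$ leaving upward, so Lemma~\ref{unboundedsolution} and Proposition~\ref{conicalend} force it to be an unbounded solution with $u'\ge 0$ on $[0,+\infty)$. This contradicts the fact that $\Gamma_R$ must reach $x=0$ or $r=0$ at $t_1(R)$, which is guaranteed for $R\ge R^*$ by the definition of $R^*$ and Lemma~\ref{lemma6}. Given $-\pi<\theta_R<0$, we get $\dot r_R=\sin\theta_R<0$, so $r_R$ is strictly decreasing on $(0,t_1(R))$. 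Hence $\Gamma_R|_{[0,t_1(R)]}$ is a graph $x=g_R(r)$ over the interval $[r_R(t_1(R)),R]$, and it is therefore embedded. It also follows that $r_R(t)\le R$.

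For the $x$-bound, Lemma~\ref{lemma1}(i) shows that $g_R$ has a single critical point, which is a maximum, so $\sup_t x_R(t)=g_R(r_R)$. For all large $R$, Lemmas~\ref{lemma3} and~\ref{lemma4} give $g_R(r_R)\le C_2/R$, and this tends to $0$. On the remaining compact range $R\in[R^*,R_0]$, I use continuous dependence of solutions of \eqref{eq1.5} on $R$. The graphs $g_R$ stay bounded provided the stopping times $t_1(R)$ are uniformly bounded, and that follows from $\dot r_R<0$ together with a lower bound on $|\sin\theta_R|$ away from the single vertical point. This gives a finite supremum of $x_R$ uniformly in $R\ge R^*$.

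For the $r$-bound, the monotonicity above gives $r_R(t)\le r_R(0)=R$, which is finite for each $R$. To make it uniform in $R\ge R^*$ as stated, I would split into the compact range $[R^*,R_0]$, where the bound is $R_0$, and large $R$. For large $R$ I would try to show that the relevant portion of the curve (after the initial descent to $r=R-\frac1R$ controlled by Lemma~\ref{lemma2}) stays in a fixed compact region, using $g_R\le C_2/R$ and the lower bound $r_R(t_1(R))\ge\delta$ from Lemma~\ref{lemma6}. I expect this uniformity in $r$ for large $R$ to be the main obstacle, because $r_R(0)=R$ itself is unbounded. I would first check whether the supremum should be read over the bounded set of parameters actually used in the later continuity argument near $R^*$. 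If not, I would look for a normalisation of $\Gamma_R$ under which the statement holds uniformly.
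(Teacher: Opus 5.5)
\textbf{Main gap: monotonicity of $r_R$.} Your embeddedness argument and your bound $r_R\le R$ both rest on the claim that $-\pi<\theta_R<0$ on $(0,t_1(R))$, and your argument for that claim does not work. Lemma~\ref{unboundedsolution} and Proposition~\ref{conicalend} only apply to solutions already known to be unbounded. Nothing forces the branch leaving a local minimum below $r=l(x)$ to be unbounded. It may rise, cross $r=l(x)$ and turn at a local maximum, which Lemma~\ref{lemma1}(iii) allows above $l$. Or it may reach a vertical point with $\theta_R=\frac{\pi}{2}$ and head back toward the $r$-axis. Neither gives a contradiction.

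The paper deliberately avoids your claim. It remarks that convexity is hard to prove and only asserts $\theta_R(T_R)=\pm\frac{\pi}{2}$. What is available is control of $x$ rather than $r$. By \eqref{eq1.5}, whenever $\dot x_R=0$ and $x_R>0$, one has $\ddot x_R=-\sin\theta_R\,\dot\theta_R=-\frac{x_R}{2}f'<0$. So $x_R$ has a single maximum at $T_R$, and the segment is the union of two graphs $r=u_{R,\pm}(x)$ on $[0,\xi_R]$. Embeddedness then reduces to these two graphs not crossing. The paper gets this by continuity in $R$: the graphs are disjoint for large $R$ by Lemma~\ref{lemma4}. A first contact would be a tangency where $u_{R_0,+}$ and $u_{R_0,-}$ solve \eqref{eq1.3} with the same Cauchy data, so they would coincide. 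Had your monotonicity been true, it would give embeddedness directly, but as it stands it is unproved.

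\textbf{Second gap: the $x$-bound on $[R^*,R_0]$.} Continuous dependence on $R$ only controls bounded time intervals. It cannot rule out $t_1(R)\to\infty$ or $\xi_R\to\infty$ as $R\to R'$. The lower bound on $|\sin\theta_R|$ that you invoke is unproved. It amounts to saying the curve cannot run off nearly horizontally, which is exactly what has to be excluded. The missing idea is the paper's use of Proposition~\ref{conicalend}. If $R_k\to R'$ and $\xi_{R_k}\to\infty$, the upper branches converge to a solution on $[0,+\infty)$ with a conical end, so $u_\sigma'\ge 0$. But that limit starts at $(0,R')$ with $R'\ge R^*>\tilde r$, hence has $u'<0$ near $x=0$. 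Your large-$R$ part via Lemmas~\ref{lemma3} and~\ref{lemma4} is fine.

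\textbf{The $r$-bound.} Your suspicion is correct, and you should state it outright. Since $r_R(t)\to R$ as $t\to 0^+$, the supremum over all $R\ge R^*$ is $+\infty$. So that bound is false as written, and no normalisation will rescue it. The meaningful version is uniformity over bounded sets of $R$, which is all that Theorem~\ref{thm:shooting} uses, since it only takes limits $R\to R^*$. The paper attributes that version to Lemma~\ref{unboundedsolution}: a blow-up of $r$ over a bounded range of $x$ would contradict it.
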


\begin{proof}
    Since $x_R(t_1(R))=0$, Lemma~\ref{lemma1}(i) implies that there exists a unique $T_R$ such that $\theta_R (T_R)=\pm\frac{\pi}{2}$. Consequently, the curve segment $\Gamma_{R}|_{[0,t_1(R)]}$ can be decomposed into the graphs $$r = u_{R,\pm}(x), \quad x \in [0, \xi_R],$$
    where $\xi_R=x_R(T_R)$ and $u_{R,+}\geq u_{R,-}$. Then Lemma~\ref{unboundedsolution} shows that
    $$\sup \{ r_R(t) : 0 < t < t_1(R), R\geq R^{*}\} < +\infty.$$
    
    Suppose, for contradiction, that there exists a sequence $\{R_k\}\subset (R^{*},+\infty)$ such that $R_k\to R'$ and $\xi_k \to +\infty.$ We denote the corresponding graph functions by $u_{k,\pm}$. Then Proposition~\ref{conicalend} implies that $u_{k,+}$ converges to a solution $u_{\sigma}$ with a conical end and $u'_{\sigma}\geq0$. This contradicts the fact that $u'_{k,+}< 0$ near $x=0$. We conclude that the family of curve segments $\Gamma_{R}|_{[0,t_1(R)]},$ for $R\geq R^*,$ is contained in a bounded region.
        
    Now we show the embeddedness of the curve $\Gamma_{R}|_{[0,t_1(R)]}$ for $R\geq R^*$. 
    Lemma~\ref{lemma4} shows that the graphs of $u_{R,-}$ and $u_{R,+}$ do not intersect for sufficiently large $R$. If the intersection occurred for some $R$, then there would exist a value $R_0$ for which the two graphs were tangent. In that case, uniqueness of solutions to the ODE system implies that $u_{R_0,-}$ and $u_{R_0,+}$ coincide, which is a contradiction. Therefore, the curve segment $\Gamma_{R}|_{[0,t_1(R)]}$ is embedded for $R\geq R^*$.
\end{proof}

\begin{lemma}\label{lemma9}
	$$\inf \{ r_R(t_1(R)) : R\geq R^*\} > 0.$$
\end{lemma}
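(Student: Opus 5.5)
We argue by contradiction, combining the compactness from Lemma~\ref{lemma7} with the small-$R$ analysis of Lemma~\ref{lemma5}. Suppose there is a sequence $R_k\geq R^*$ with $r_{R_k}(t_1(R_k))\to 0$. By Lemma~\ref{lemma6}, for all sufficiently large $R$ we already have $r_R(t_1(R))\geq\delta$. Hence we may assume $R_k$ is bounded, and after passing to a subsequence $R_k\to R'\in[R^*,\infty)$.

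For $R\geq R^*$, Lemma~\ref{lemma7} shows that $\Gamma_R|_{[0,t_1(R)]}$ is embedded and lies in a fixed bounded region. Its proof also gives the decomposition into the upper and lower graphs $u_{R,\pm}$ over $[0,\xi_R]$, and the lower branch $u_{R,-}$ meets the $r$-axis at height $r_R(t_1(R))$. Viewed as a graph over the $r$-axis, this lower branch is $x=g_R(r)$ on $[r_R(t_1(R)),\,r_R(T_R)]$ with $g_R\geq 0$. It vanishes at the left endpoint and is increasing there, and the unique maximum of $g_R$ (Lemma~\ref{lemma1}(i)) is the vertical point at $T_R$.

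The key step is to run the argument of Lemma~\ref{lemma5} starting from the small radius $\rho_k:=r_{R_k}(t_1(R_k))$, with data $g(\rho_k)=0$ and $g'(\rho_k)>0$. That lemma forces the critical point of $g_{R_k}$, namely $\bar R_k=r_{R_k}(T_{R_k})$, to satisfy $\bar R_k\to 0$, and it bounds the maximum $\xi_{R_k}=g_{R_k}(\bar R_k)$. Concretely, by \eqref{eq2-12}-type estimates we get $\xi_{R_k}\lesssim \bar R_k$, so $\xi_{R_k}\to 0$ as well. Now look at the upper branch $u_{R_k,+}$. It starts at $(0,R_k)$ with $R_k\geq R^*>\tilde r$ and reaches the vertical point $(\xi_{R_k},\bar R_k)$, which tends to the origin. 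So the upper branch must descend from height about $R'$ to height near $0$ over an $x$-interval of length $\xi_{R_k}\to0$.

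We then pass to the limit. By continuous dependence on initial data, $\Gamma_{R_k}$ converges to $\Gamma_{R'}$ on compact time intervals as long as $\Gamma_{R'}$ stays away from $r=0$. Along the upper branch, $\theta$ would have to reach $-\pi/2$ at an $x$-value tending to $0$. This contradicts the fact, from Lemma~\ref{lemma2}-type local estimates or simply from continuity of $\Gamma_{R'}$ at $(0,R')$, that near $x=0$ the curve $\Gamma_{R'}$ is a graph with bounded slope over an $x$-interval of fixed length. An alternative route to the same contradiction is to note that the limit curve $\Gamma_{R'}$ would have to approach the axis $r=0$ tangentially from an embedded segment, which is incompatible with \eqref{eq1.5}. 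Near $r=0$ the term $\frac{n-1}{r}\cos\theta$ dominates and forces $\theta$ to turn so that the curve cannot reach the axis except orthogonally, and then only as in Lemma~\ref{lemma5}.

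The main obstacle is applying Lemma~\ref{lemma5} uniformly. That lemma was stated for solutions starting at a small $R$ with arbitrary $g'(R)>0$, whereas here the branch arrives at the axis from the side and the slope $g'(\rho_k)$ is uncontrolled. I would first check that the proof of Lemma~\ref{lemma5} only uses $g(R)\geq0$ and $g'(R)>0$, which is indeed the stated hypothesis. Its conclusion $\bar R\to0$ as $R\to0$ is uniform in $g'(R)$, since estimates \eqref{eq2-11}--\eqref{eq2-17} are in terms of $\sin\varphi_R(R)\leq 1$ only. With that uniformity, the contradiction with the boundedness-plus-convergence picture above follows.
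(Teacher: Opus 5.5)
Your argument is correct and is essentially the paper's. Both proofs use Lemma~\ref{lemma6} to reduce to a bounded sequence $R_k\to R'$, apply Lemma~\ref{lemma5} to the lower branch starting at $r_{R_k}(t_1(R_k))\to 0$ to force the vertical point's height $u_{k,-}(\xi_k)\to 0$, and then play \eqref{eq1.3} on the lower branch against a uniform lower bound $\xi_k>B_2$ that comes from the upper branch near $(0,R_k)$. The differences are only in presentation. You integrate \eqref{eq1.3} to get $\xi_k\lesssim u_{k,-}(\xi_k)$, where the paper says $u_{k,-}\to 0$ on $[0,B_2]$ is impossible. You also justify $\xi_k>B_2$ by continuous dependence at $R'\ge R^*>0$, and you flag that Lemma~\ref{lemma5} must hold uniformly in $g'(\rho_k)$; the paper leaves both points implicit.
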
 

\begin{proof}
	For sufficiently large $R$, the result follows from Lemma~\ref{lemma6}. Suppose, for contradiction, that the conclusion is false.  Then there exists a bounded sequence $\{R_k\}\subset (R^{*},+\infty)$ such that the corresponding curves $\Gamma_{R_k}$ satisfy
	$$(x_{R_k}(t_{1}(R_k)), r_{R_k}(t_{1}(R_k)))\to (0,0).$$
	Since the sequence $\{R_k\}$ is bounded, there exists a constant $B_2>0$, independent of $k$, such that $\xi_k>B_2$.
	
	Decompose the curve segment $\Gamma_{R_k}|_{[0,t_1(R_k)]}$ into the graphs of $u_{k,+}$ and $u_{k,-}$ on $[0,\xi_k]$. For the lower branch $u_{k,-}$, we have $u'_{k,-}\geq 0$ and $u''_{k,-}>0$ by Lemma~\ref{lemma1}. It follows from Lemma~\ref{lemma5} that $u_{k,-}(\xi_k)\to 0$.
	However,  this implies that $u_{k,-} \to 0$ on $[0,B_2]$, which is impossible in view of \eqref{eq1.3}.
\end{proof}

The preceding two lemmas show that the curve segments $\Gamma_{R}|_{[0,t_1(R)]}$ remain regular as $R \to R^{*}$. We can therefore prove the main theorem by a continuity argument.

\begin{thm}\label{thm:shooting}
	For $R = R^{*}$, the corresponding solution $(x_{R^{*}}(t), r_{R^{*}}(t))$ of \eqref{eq1.5}, subject to the initial conditions \eqref{eq1.6}, satisfies
	$x_{R^{*}}(t_1(R^{*}))=0$ and $\theta_{R^{*}}(t_1(R^{*}))=-\pi.$
\end{thm}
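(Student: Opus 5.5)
Following the paper's outline, the argument is a continuity argument at the threshold $R^*$, combining the uniform bounds of Lemma~\ref{lemma7} and Lemma~\ref{lemma9} with continuous dependence of solutions of \eqref{eq1.5} on $R$.

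\textbf{Step 1: a limit curve.} Take a sequence $R_k\downarrow R^*$ with $R_k>R^*$, so that $x_{R_k}(t_1(R_k))=0$ by the definition of $R^*$. By Lemma~\ref{lemma7}, the segments $\Gamma_{R_k}|_{[0,t_1(R_k)]}$ lie in a fixed bounded region. By Lemma~\ref{lemma9}, they stay a uniform distance away from the axis $r=0$: their endpoints do, and the lower branch $u_{k,-}$ is convex and nondecreasing, so its minimum is at its endpoint. On this region the right-hand side of \eqref{eq1.5} is smooth and uniformly bounded, so $\dot\theta$ is bounded.

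Next I would bound the arclengths $t_1(R_k)$ uniformly. Each segment is a union of two graphs over $[0,\xi_k]$ with $\xi_k$ bounded, and the total turning is at most $\pi$. Hence the lengths are bounded, by a bound on the total variation of the graph functions; the number of monotone pieces is controlled by Lemma~\ref{lemma1}. Passing to a subsequence, $t_1(R_k)\to t^*$. Continuous dependence on initial data then shows that $\Gamma_{R^*}$ exists on $[0,t^*]$ and that $(x,r,\theta)_{R_k}(t_1(R_k))\to (x,r,\theta)_{R^*}(t^*)$. In particular $x_{R^*}(t^*)=0$, $r_{R^*}(t^*)>0$, and $\theta_{R^*}(t^*)\ge-\pi$.

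\textbf{Step 2: identify the angle.} I would show $\theta_{R^*}(t^*)=-\pi$ and $t^*=t_1(R^*)$, arguing by contradiction: suppose $\theta^*:=\theta_{R^*}(t^*)>-\pi$. Since the curve comes back to the $r$-axis from $x>0$, we have $\cos\theta^*\le0$, so the crossing of the $r$-axis at $t^*$ is transversal unless $\theta^*=-\pi/2$. The vertical case is ruled out because $u_{R^*,+}$ and $u_{R^*,-}$ would meet tangentially at $x=0$, contradicting ODE uniqueness as in the embeddedness part of Lemma~\ref{lemma7}.

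With a transversal crossing, $\theta^*\in(-\pi,-\pi/2)$ and the exit point has $r>0$. The implicit function theorem then shows that for all $R$ in a neighbourhood of $R^*$, including $R<R^*$, the curve $\Gamma_R$ crosses $x=0$ near $t^*$ with $\theta_R>-\pi$ there. One must also check that there is no earlier crossing: an earlier crossing near $t=0$ is excluded because $\theta_R\approx0$ there and $x_R>0$ initially, and one in the middle is excluded by the compactness from Step 1. Thus $x_R(t_1(R))=0$ on a full neighbourhood of $R^*$, contradicting the infimum in the definition of $R^*$.

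\textbf{Main obstacle.} The step I expect to be hardest is the openness claim in Step 2: showing rigorously that for $R$ slightly below $R^*$, $t_1(R)$ is still realized by an $x=0$ crossing near $t^*$ rather than by some earlier event. Such an event could be $\theta_R$ reaching $-\pi$ earlier, or the curve touching $r=0$. I would handle this by using the strict inequalities $\theta_{R^*}(t)>-\pi$ and $r_{R^*}(t)>0$ for $t\le t^*$, which follow from $\theta^*>-\pi$ and the monotonicity of $\theta$ along each graph branch given by Lemma~\ref{lemma1}. Uniform convergence on the compact interval $[0,t^*+\epsilon]$ then carries these inequalities over to nearby $R$.
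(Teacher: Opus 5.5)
Your proposal is correct and follows the paper's own continuity argument: compactness from Lemmas~\ref{lemma7} and~\ref{lemma9}, a limit curve at $R^*$, and continuous dependence to contradict the infimum defining $R^*$. You carry it out more carefully than the paper does, for instance by requiring the property on a whole neighbourhood of $R^*$ rather than for a single $\tilde R<R^*$, and by obtaining $\theta_{R^*}(t^*)\ge-\pi$ directly as a limit.

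Two small repairs are needed:
\begin{enumerate}[(i)]
\item The vertical crossing $\theta^*=-\pi/2$ at $x=0$ should be ruled out differently. The same applies to any interior tangential touching of $x=0$, which compactness alone does not exclude. The right reason is that $x\equiv0$, $\theta\equiv-\pi/2$ is itself a solution of \eqref{eq1.5}, so ODE uniqueness forbids such a touching; the branch-tangency argument of Lemma~\ref{lemma7} does not apply here.
\item Convexity of the upper branch is unknown, so the uniform bound on $t_1(R_k)$ cannot come from a total turning of at most $\pi$. It should instead come from the curvature bound $|\dot\theta|\le C$ for graphs over $[0,\xi_k]$ in a bounded region.
\end{enumerate}
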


\begin{proof}
	By Lemmas~\ref{lemma9} and \ref{lemma7}, the family $(x_{R}(t), r_{R}(t))$ converges uniformly, up to a subsequence, to $(x_{R^{*}}(t), r_{R^{*}}(t))$. By Lemma~\ref{lemma6}, we have $x_{R^{*}}(t_1(R^{*}))=0$. We claim that $\theta_{R^{*}}(t_1(R^{*})) = -\pi$.
	
	We argue by contradiction. If $\theta_{R^{*}}(t_1(R^{*})) > -\pi$, then, by continuous dependence on the initial data, 
	there exists $\tilde{R} < R^{*}$ such that $\theta_{\tilde{R}}(t_1(\tilde{R})) > -\pi$, which contradicts the definition of $R^{*}$. 
	Similarly, if $\theta_{R^{*}}(t_1(R^{*})) < -\pi$, then there exists $\tilde{R} > R^{*}$ such that $\theta_{\tilde{R}}(t_1(\tilde{R})) < -\pi$, again a contradiction. Therefore, $\theta_{R^{*}}(t_1(R^{*})) = -\pi$. 
\end{proof}
\begin{proof}[Proof of Theorem~\ref{th1}]
	By Theorem~\ref{thm:shooting}, the corresponding profile curve
	is closed, embedded, and reflection-symmetric. Rotating this curve about the $x$-axis produces
	an embedded rotationally symmetric $f$-minimal hypersurface diffeomorphic to $S^1\times S^{n-1}$.
\end{proof}
\bibliographystyle{plain}
\bibliography{reference}

\end{document}